\documentclass[11pt]{article}

\textheight 8.5in
\textwidth 6.5 in
\oddsidemargin 0in
\topmargin 0in

\usepackage{amssymb,amsmath, amscd}
\usepackage{graphicx}

\usepackage{amsfonts,amssymb}
\usepackage[all, knot]{xy}

\usepackage{amsfonts}
\usepackage{amsmath}
\usepackage{amssymb}

\usepackage{chngpage}

\newcommand{\qed}{\hbox{\rule{6pt}{6pt}}}

\newcommand{\Z}{\mathbb{Z}}
\newcommand{\K}{\mathbb{K}}

\def\bar{\overline}
\def\o{\otimes}

\newtheorem{theorem}{Theorem}[section]

\newtheorem{lemma}[theorem]{Lemma}
\newtheorem{proposition}[theorem]{Proposition}
\newtheorem{remark}[theorem]{Remark}

\newtheorem{definition}[theorem]{Definition}

\newtheorem{example}[theorem]{Example}

\newcommand{\Hom}{\mbox{\rm Hom}}

\newenvironment{proof}[1][Proof]{\smallskip\noindent{\bf #1.}\quad}%
{\qed\par\medskip}

\date{}

\begin{document}

\title{ Ternary Distributive Structures and Quandles }

\author{Mohamed Elhamdadi
 \footnote{ Email: \texttt{emohamed@math.usf.edu}}\\
 University of South Florida, USA
 \and
 Matthew Green
 \footnote{ Email: \texttt{ mjgreen@mail.usf.edu}}\\
 University of South Florida, USA
 \and
 Abdenacer Makhlouf
 \footnote{Email: \texttt{Abdenacer.Makhlouf@uha.fr}}\\
 Universit\'e de Haute Alsace, France}

\maketitle

\vspace{10mm}

\begin{abstract}

We introduce  a notion of ternary distributive algebraic structure, give examples, and relate it to the notion of a quandle. Classification is given for low order structures of this type.   Constructions of such structures from ternary bialgebras are provided.  We also describe ternary distributive algebraic structures coming from groups and give examples from vector spaces whose bases are elements of a finite ternary distributive set. We introduce a cohomology theory that is analogous to Hochschild cohomology and relate it to a formal deformation theory  of these structures.

\end{abstract}

\textsc{Keywords:} Ternary, distributivity, quandle, cohomology, deformation.

\textsc{2000 MSC:} 20B25, 20B20

\section*{Introduction}

Ternary operations, which are natural generalizations of binary operations,  appear in many areas of mathematics and physics.  An example of a ternary operation of an associative type is a map $\mu$ on a set $X$ satisfying $\mu(\mu(x,y,z),u,v)=\mu(x,\mu(y,z,u),v)=\mu(x,y,\mu(z,u,v))$.  Algebras with these multiplications are called totally associative ternary algebras  and have been considered, for example, in \cite{AM1,AM2}.
The first ternary algebraic structure given in an axiomatic form appeared in 1949 in the work of  N. Jacobson \cite{Jack}.  He considered a Lie bracket $[x,y]$ in a Lie algebra $\cal{L}$ and a subspace that is closed with respect to $[[x,y],z]$ which he called a Lie triple system.  Since then, many works were devoted to ternary structures and their cohomologies (see for example \cite{harris, carlsson, lister, loos, seibt, yamaguti}).  A typical example of  an associative triple system is the ternary algebra of rectangular matrices introduced by M. R. Hestenes \cite{hestenes} where the ternary product is $AB^*C$ (the $*$ stands for the conjugate transpose).  
In theoretical physics, the progress of quantum mechanics, Nambu mechanics, and the work of S. Okubo \cite{Okubo1, Okubo2} allowed an important development in the theory of ternary algebras (see \cite{  AMS1, AMM,  AMS2} for example).  
Furthermore, this generalization of Hamiltonian systems by Nambu generated some profound studies of Nambu-Lie ternary algebras, which are generalizations of Lie algebras.  The algebraic formulation of this structure was achieved by Fillipov \cite{Filippov:nLie} and Takhtajan \cite{takhtajan} based on some generalization of the Jacobi identity.

Distributivity in algebraic structures appeared in many contexts, such as in the quasigroup theory,
the semigroup theory, and the algebraic knot theory.
The notion of a quandle (involutive quandle)  appeared first as an abstraction of
the notion of symmetric transformation, while the racks  were studied
in the context of the conjugation operation in a group. Around 1982, Joyce and Matveev  introduced independently the
notion of a quandle, they associated a quandle  to each oriented knot; this quandle is  called
the knot quandle.  Since then quandles
and racks have been investigated by topologists in order to construct knot and link invariants and
their higher analogues. We  aim in this paper to extend these notions to ternary operations.


 This paper is organized as follows. In Section 1, we will review the basics on quandles and introduce ternary distributive structures  and more generally  $n$-ary distributive operations. We will define naturally the notion of ternary quandles and racks. We will provide some examples and describe some properties.  In section 2, we will give the classification of ternary quandles of low order up to isomorphisms and  we describe ternary distributive algebraic structures coming from groups.  In Section 3,  we will provide some constructions from ternary bialgebras and 3-Lie algebras.  In Section 4 we will
 describe a low dimensional cohomology theory of distributive ternary bialgebra that fits with a deformation theory of ternary distributive operations.
 Section 5  is dedicated to a deformation theory of a weak ternary distributive bialgebra and in particular ternary quandles.

\section{ Quandles and Ternary Distributive Structures}

We begin this section by reviewing the basics of quandles and give examples in order to introduce their analogues in the ternary setting.

A {\it quandle}, $X$, is a set with a binary operation $(a, b) \mapsto a * b$
such that

(1)  For any $a,b,c \in X$, we have
$\quad  (a*b)*c=(a*c)*(b*c). $

(2) For any $a,b \in X$, there is a unique $x \in X$ such that
$a= x*b$.

(3)  For any $a \in X$,
$a* a =a$.\\
 \noindent
Axiom (2) states that for each $u \in X$, the map $R_u:X \rightarrow X$ with $R_u(x):=x*u$ (right multiplication by $u$) is a bijection.  
\noindent
A {\it rack} is a set with a binary operation that satisfies
(1) and (2).

\noindent
Racks and quandles have been studied in, for example,
\cite{FR,Joyce,Matveev}.  The axioms of a quandle (1), (2) and (3) above correspond respectively to the
Reidemeister moves of type III, II, and I.  For more details, see
\cite{FR}, for example. \\
\noindent
Here are some typical examples of quandles.


\begin{list}{--}{}
\item
Any set $X$ with the operation $x*y=x$ for any $x,y \in X$ is
a quandle called the {\it trivial} quandle.
The trivial quandle of $n$ elements is denoted by $T_n$.

\item
A group $X=G$ with
$n$-fold conjugation
as the quandle operation: $a*b=b^{n} a b^{-n}$.

\item
Let $n$ be a positive integer.
For elements
$i, j \in \Z_n$ (integers modulo $n$),
define
$i\ast j \equiv 2j-i \pmod{n}$.
Then $\ast$ defines a quandle
structure  called the {\it dihedral quandle},
  $R_n$.
This set can be identified with  the
set of reflections of a regular $n$-gon
  with conjugation
as the quandle operation.
\item
Any $\Lambda (={\Z }[t, t^{-1}])$-module $M$
is a quandle with
$a*b=ta+(1-t)b$, $a,b \in M$, called an {\it  Alexander  quandle}.
Furthermore for a positive integer
$n$, a {\it mod-$n$ Alexander  quandle}
${\Z }_n[t, t^{-1}]/(h(t))$
is a quandle
for
a Laurent polynomial $h(t)$.
The mod-$n$ Alexander quandle is finite
if the coefficients of the
highest and lowest degree terms
of $h$
  are units in $\Z_n$.
  \end{list}
Now we introduce the analogous notion of a quandle in the ternary setting.

\begin{definition}\label{ternarydef}  {\rm 
Let $Q$ be a set and $T: Q\times Q \times Q \rightarrow Q$ be a ternary operation on $Q$.  The operation  $T$ is said to be {\it right distributive} if it satisfies the following condition
for all  $ x,y,z,u,v \in Q $
\begin{equation}\label{C1-ternary}T(T(x,y,z),u,v)=T(T(x,u,v),T(y,u,v),T(z,u,v)). \quad  \text{(right distributivity)}\end{equation}
}
 \end{definition}

\begin{remark} {\rm
Note that one can similarly define the notions of  left distributive as well as middle distributive.  Through the rest of this paper, we will use distributive to refer to specifically right distributive.\\
Using the diagonal map $D: Q \rightarrow Q \times Q \times Q=Q^{\times 3} $ such that $D(x)=(x,x,x)$, equation  (\ref{C1-ternary}) can be written, as a map from $Q^{\times 5} $ to $Q$,  in the following form 
\begin{equation}\label{eq2}
T\circ (T \times id \times id)=T \circ ( T \times  T \times T)\circ\rho\circ( id \times id \times id \times D \times D).
\end{equation}
where $id$ stands for the identity map and in the whole paper we denote by 
$\rho:Q^{\times 9}\rightarrow Q^{\times 9}$ the map defined as $\rho=p_{6,8}\circ p_{3,7}\circ p_{2,4}$  where $p_{i,j}$ is the transposition $i^{th}$ and $j^{th}$ elements, i.e.
\begin{equation}\label{RHO}
\rho(x_1,\cdots,x_9)=(x_1,x_4,x_7,x_2,x_5,x_8,x_3,x_6,x_9).
\end{equation}  
Equation (\ref{eq2}) as a new form of equation  (\ref{C1-ternary})  will be used in section \ref{bialg} in the context of ternary bialgebras.
}
 \end{remark}
\begin{definition}\label{ternaryquandledef}  {\rm 
Let $T: Q\times Q \times Q \rightarrow Q$ be a  ternary operation on a set $Q$. 
 The pair $(Q,T)$ is said to be \emph{ternary shelf} if $T$ satisfies identity (\ref{C1-ternary}).  If, in addition, for all $a,b \in Q$, the map $R_{a,b}:Q \rightarrow Q$ given by $R_{a,b}(x)=T(x,a,b)$ is invertible,  
  then $(Q,T)$ is said to be \emph{ternary rack}.  If further $T$ satisfies
$  T(x,x,x)=x,$ for all $ x \in Q,$
  then $(Q,T)$ is called a \emph{ternary quandle}.

}
\end{definition}

The figure below is a diagrammatic representation of equation (1).

\vspace{-1.5cm}
\small{
\[\includegraphics[width=0.7\textwidth]{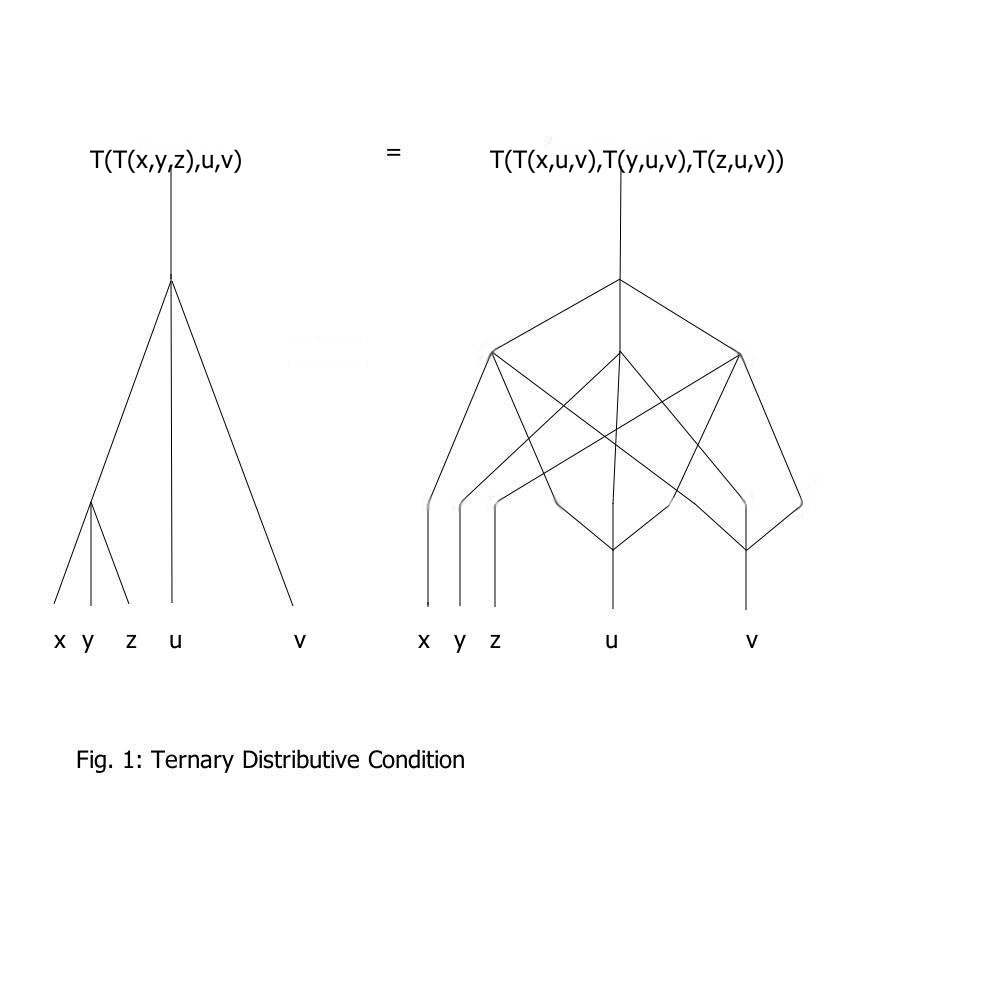}\]
}
\vspace{-3cm}
 \begin{example}
 Let $(Q,*)$ be a quandle and define a ternary operation on $Q$ by $T(x,y,z)=(x * y)* z, \forall x,y,z \in Q$.  It is straightforward to see that $(Q, T)$ is a ternary quandle.  Note that in this case $R_{a,b}=R_b \circ R_a$. We will say that this ternary quandle is induced by a (binary) quandle.

  \end{example}

   \begin{example}
   Let $(M,*)$ be an Alexander quandle, then the ternary quandle coming from $M$ has the operation $T(x,y,z)=T^2x+T(1-T)y+(1-T)z$.
    \end{example}

 \begin{example}
  Let $M$ be any ${\Lambda}$-module where
${\Lambda}=\mathbb{Z}[t^{\pm 1},s]$. The operation $T(x,y,z)=tx+sy+(1-t-s)z$ defines a ternary quandle structure on $M$.  We call this an {\it affine} ternary quandle.
    \end{example}

     \begin{example}
     Consider $\Z_8$ with the ternary operation $T(x,y,z)=3x+2y+4z$.  This affine ternary quandle is not induced by an Alexander quandle structure since $3$ is not a square in $\Z_8$.
     \end{example}

      \begin{example}
  Any group $G$ with the ternary operation $T(x,y,z)=xy^{-1}z$  gives an example of ternary quandle.  This is called $heap$ (sometimes also called a groud) of the group $G$.
     \end{example}

\noindent
A morphism of ternary quandles is a map $\phi: ( Q, T) \rightarrow (Q',T')$ such that $$\phi(T(x,y,z))=T'(\phi(x),\phi(y),\phi(z)).$$  A bijective ternary quandle endomorphism is called ternary quandle automorphism.\\
Therefore, we have a category 
whose objects are ternary quandles and morphisms as defined above.

\begin{definition}  {\rm 
A ternary rack (resp. ternary quandle) $(Q,T)$  is said to be pointed if there is a distinguished element denoted $1 \in Q$ such  that,
for all $x, y  \in Q, T(x,1,1)=x,$ and $ T(1,x,y)=1.$

}
 \end{definition}
 As in the case of the binary quandle there is a notion of {\it medial} ternary quandle 
 
 \begin{definition}  {\rm \cite{Borowic}
A ternary quandle $(Q,T)$  is said to be {\it medial} if for all $a, b, c, d, e, f, g, h, k  \in Q,$ the following identity is satisfied
$$T(T(a,b,c),T(d,e,f),T(g,h,k))= T(T(a,d,g),T(b,e,h),T(c,f,k)).$$
}
 \end{definition}
 This definition of {\it mediality} can be written in term of the following commutative diagram

\[\begin{xy}
    (-2, 20)*+{\underbrace{Q \times \cdot \cdot \cdot \times Q}_{\text{ 9 times}}}="1";
    (-30,10)*+{Q \times Q \times Q}="2";
    (-30,-10)*+{Q}="3";
    (30,-10)*+{Q \times Q \times Q}="5";
    (30, 10)*+{ \underbrace{Q \times \cdot \cdot \cdot \times Q}_{\text{ 9 times}} }="6";
        {\ar^{\rho} "1";"6"};
        {\ar_{T \times T \times T} "1";"2"};
        {\ar_{T} "2";"3"};
        {\ar^{T \times T \times T} "6";"5"};
         {\ar_{T} "5";"3"};

\end{xy}
\]

 Where $\rho=(24)(37)(68)$ is the permutation of the set $\{1,\cdot \cdot \cdot ,9\}$ defined above.
  \begin{example}
  Every affine ternary quandle is medial.       \end{example}

We generalize the notion of ternary quandle to $n$-ary setting.

 \begin{definition}\label{n-ternarydef}  {\rm 
An $n$-ary distributive set is a pair $(Q,T)$ where $Q$ is a set and $T: Q^{ \times n} \rightarrow Q$ is an $n$-ary operation satisfying the following conditions:
\begin{enumerate}
\item
\begin{eqnarray*}\label{C1-n-ary}&T(T(x_1,\cdots,x_n),u_1,\cdots, u_{n-1})=\nonumber \\ & T(T(x_1,u_1,\cdots, u_{n-1}),T(x_2,u_1,\cdots, u_{n-1}),\cdots, T(x_n,u_1,\cdots, u_{n-1})),\end{eqnarray*} $ \forall x_i,u_i \in Q$ (distributivity).

\item
For all $a_1,\cdots,a_{n-1} \in Q$, the map $R_{a_1,\cdots,a_{n-1}}:Q \rightarrow Q$ given by $$R_{a_1,\cdots,a_{n-1}}(x)=T(x,a_1,\cdots,a_{n-1})$$ is invertible.  

\item
For all $x \in Q$,\begin{equation*}\label{C3-n-ary} T(x,\cdots,x)=x.\end{equation*}

\end{enumerate}

If $T$ satisfies only condition(1), then $(Q,T)$ is said to be an $n$-ary shelf.  If both conditions (1) and  (2) are satisfied then $(Q,T)$ is said to be an $n$-ary rack. If all three conditions (1), (2) and (3) are satisfied then $(Q,T)$ is said to be an $n$-ary quandle.

}
 \end{definition}
 
 \begin{definition}  {\rm 
An $n$-ary quandle $(Q,T)$  is said to be {\it medial} if for all $x_{ij}  \in Q, 1 \leq i,j \leq n,$ the following identity is satisfied
\begin{eqnarray*}
T(T(x_{11},x_{12},\cdots, x_{1n}),T(x_{21},x_{22},\cdots, x_{2n}),\cdots, T(x_{n1},x_{n2},\cdots, x_{nn}))=\nonumber \\  
T(T(x_{11},x_{21},\cdots, x_{n1}),T(x_{12},x_{22},\cdots, x_{n2}),\cdots, T(x_{1n},x_{2n},\cdots, x_{nn})).
\end{eqnarray*} 
}
 \end{definition}

\section{Classification of Ternary Quandles of Low Orders}
We give in this section the classification of ternary quandles 
up to isomorphisms. We provide all ternary  quandles of order 2 and 3. Moreover we describe  ternary distributive structures coming from groups. Recall that a ternary quandle is a pair $(Q,T)$, where $Q$ is a set and $T$ a ternary operation,  satisfying the following  conditions 
\begin{enumerate}
\item $T(T(x,y,z),u,v)=T(T(x,u,v),T(y,u,v),T(z,u,v)),$ for all $x,y,z,u,v \in Q$,
 \item  for all $a,b \in Q$, the map $R_{a,b}:Q \rightarrow Q$ given by $R_{a,b}(x)=T(x,a,b)$ is invertible,  
 \item  for all $ x \in Q,$
$  T(x,x,x)=x$.
\end{enumerate}
\subsection{Ternary quandles of order two}
We have the following lemma which states that there are two non-isomorphic ternary quandle structures on a set of two elements.
\begin{lemma}
Let $Q=\{1,2\}$ be a set of two elements.  There are two non-isomorphic ternary quandle structures on $Q$ given by

\begin{eqnarray*}
(i) \quad  T(1,1,1)=1,\quad  T(2,1,1)=2, \quad T(1,1,2)=1, \quad T(2,1,2)=2,\\
T(2,2,2)=2, \quad T( 1,2,2)=1, \quad T(1,2,1)=1, \quad T(2,2,1)=2,
\end{eqnarray*}
 and
\begin{eqnarray*}
 (ii) \quad  T(1,1,1)=1,\quad  T(2,1,1)=2, \quad T(1,1,2)=2, \quad T(2,1,2)=1,\\
T(2,2,2)=2, \quad T( 1,2,2)=1, \quad T(1,2,1)=2, \quad T(2,2,1)=1.
\end{eqnarray*}
\end{lemma}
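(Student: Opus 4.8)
\medskip
\noindent\textbf{Proof strategy.}
The plan is a direct finite enumeration: use the three axioms to reduce the $2^{8}$ possible ternary operations on $Q=\{1,2\}$ to a short list of candidates, and then test right distributivity on each survivor. First I would invoke idempotency (axiom (3)) to fix $T(1,1,1)=1$ and $T(2,2,2)=2$, which leaves the six values $T(1,1,2)$, $T(1,2,1)$, $T(2,1,1)$, $T(1,2,2)$, $T(2,1,2)$, $T(2,2,1)$ undetermined.

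Next I would apply axiom (2): a self-map of the two-element set $Q$ is invertible if and only if it is injective, so $R_{a,b}$ is a bijection exactly when $T(1,a,b)\neq T(2,a,b)$. The case $(a,b)=(1,1)$ then forces $T(2,1,1)=2$ and the case $(a,b)=(2,2)$ forces $T(1,2,2)=1$, while $(a,b)=(1,2)$ and $(a,b)=(2,1)$ impose only the two inequalities $T(1,1,2)\neq T(2,1,2)$ and $T(1,2,1)\neq T(2,2,1)$. Each inequality is a single binary choice, so at most four operations satisfy axioms (2) and (3).

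Then I would check axiom (1), namely $T(T(x,y,z),u,v)=T(T(x,u,v),T(y,u,v),T(z,u,v))$, on these four candidates. Two of them survive: the operation $T(x,y,z)=x$ (the trivial ternary quandle), which clearly satisfies (1), and the operation of case (ii), which under the identification $\{1,2\}\simeq\Z_2$ is $T(x,y,z)=x+y+z$, an affine ternary quandle and hence a ternary quandle by the affine example above (indeed both survivors are exactly the two affine ternary quandle structures on $\Z_2$, coming from $s=0$ and $s=1$). For each of the other two candidates I would exhibit a single quintuple at which distributivity fails; for instance, for the candidate with $T(1,1,2)=1$ and $T(1,2,1)=2$ (so $T(2,1,2)=2$, $T(2,2,1)=1$), taking $(x,y,z,u,v)=(1,2,1,2,1)$ gives $T(T(1,2,1),2,1)=T(2,2,1)=1$ but $T(T(1,2,1),T(2,2,1),T(1,2,1))=T(2,1,2)=2$, and the remaining candidate is eliminated by an analogous substitution. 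Hence the only ternary quandles on $Q$ are $(i)$ and $(ii)$.

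Finally I would check that $(i)$ and $(ii)$ are not isomorphic. The only bijections $Q\to Q$ are the identity, which fails because the two tables differ, and the transposition $\tau$, which fails because $\tau\bigl(T_{(i)}(1,1,2)\bigr)=\tau(1)=2$ while $T_{(ii)}(\tau1,\tau1,\tau2)=T_{(ii)}(2,2,1)=1$; equivalently, in $(i)$ every translation $R_{a,b}$ is the identity whereas in $(ii)$ one has $R_{1,2}=\tau$, and this feature is preserved under isomorphism. Every step here is a bounded computation, so there is no conceptual obstacle; the only thing requiring care is organizing the case split cleanly and not slipping in the distributivity checks.
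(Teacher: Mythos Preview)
Your argument is correct and follows essentially the same finite enumeration as the paper: fix values by idempotency and bijectivity, reduce to the four candidates determined by $T(1,1,2)$ and $T(1,2,1)$, kill two of them with a single distributivity violation, and rule out the transposition as an isomorphism. The one genuine addition in your write-up is the identification of the two survivors as the affine ternary quandles on $\Z_2$ with $s=0$ and $s=1$, which cleanly certifies that $(i)$ and $(ii)$ really do satisfy axiom~(1); the paper's proof shows these are the only possibilities but does not separately verify that both candidates are in fact ternary quandles.
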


\begin{proof} Let $Q=\{1,2\}$ and $T$ a ternary quandle operation on $Q$.  Then we have $T(1,1,1)=1$ and $T(2,1,1)=2$.  Similarly, we have $T(2,2,2)=2$ and $T(1,2,2)=1$. Now we need to choose a value for $T(1,1,2)$.
We distinguish two cases:

\noindent
{\bf Case 1}: Assume $T(1,1,2)=1$, this implies $T(2,1,2) =2$ (by second axiom).  We claim that in this case $T(1,2,1)$ can not equal $2$, otherwise $T(2,2,1)=1$ (again axiom (2)).  Now use axiom (3) of right-self-distributivity to get $T(T(2,1,2),2,1)=T( T(2,2,1), T(1,2,1), T(2,2,1) )$ implying that $T(1,2,1)=T(1,2,1)$ but this contradicts the bijectivity of axiom (2).  Then $ T(1,2,1)=1$ and $T(2,2,1)=2$.  This end the proof for case $1$.

\noindent
{\bf Case 2}:  Assume $T(1,1,2)=2$, this implies $T(2,1,2) =1$ (by second axiom).
As in case 1, we prove similarly that $T(1,2,1)$ can not equal $1$, thus $T(1,2,1)=2$ and $T(2,2,1)=1$.  Now, the only non-trivial bijection of the set $\{1,2\}$ is the transposition
sending $1$ to $2$.  It's easy to see that this transposition is not a
homomorphism between the two ternary quandles given in case $1$ and case $2$.
\end{proof}

\subsection{Ternary quandles of order three}

To help classify the ternary quandles two observations proved are useful. First we note that  every ternary quandle is related to some (binary) quandle.

\begin{remark}
If $(Q,T)$ is a ternary quandle, then $(Q,*)$, where $x*y=T(x,y,y)$ is a (binary) quandle.
\end{remark}

We shall refer to this related quandle as the \emph{associated quandle}. We now consider how the relation between associated quandles extends to the ternary quandles.

\begin{lemma}
Let $(Q,T)$ be a ternary quandle, and $(Q,*)$, be the associated quandle defined by $x*y=T(x,y,y)$. If $(R,*')$ is a quandle such that $(Q,*)\cong(R,*')$, then there exists a ternary quandle $(R,T') \cong (Q,T)$ such that $x*'y=T'(x,y,y)$.
\end{lemma}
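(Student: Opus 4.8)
The plan is to transport the ternary operation $T$ from $Q$ to $R$ along the given quandle isomorphism and then verify that the resulting ternary quandle has $(R,\ast')$ as its associated quandle. Fix an isomorphism of binary quandles $\phi\colon(Q,\ast)\to(R,\ast')$, so $\phi$ is a bijection with $\phi(a\ast b)=\phi(a)\ast'\phi(b)$ for all $a,b\in Q$. Define $T'\colon R\times R\times R\to R$ by
\[
T'(a,b,c)=\phi\bigl(T(\phi^{-1}(a),\phi^{-1}(b),\phi^{-1}(c))\bigr).
\]
By construction $\phi(T(x,y,z))=T'(\phi(x),\phi(y),\phi(z))$, so $\phi$ is an isomorphism of ternary magmas $(Q,T)\to(R,T')$; it remains to check that $(R,T')$ satisfies the three ternary quandle axioms and that $x\ast'y=T'(x,y,y)$.

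The verification of the axioms for $(R,T')$ is routine and amounts to conjugating each axiom of $(Q,T)$ by the bijection $\phi$. For idempotency, $T'(a,a,a)=\phi(T(\phi^{-1}(a),\phi^{-1}(a),\phi^{-1}(a)))=\phi(\phi^{-1}(a))=a$. For invertibility, the right multiplication $R'_{a,b}(x)=T'(x,a,b)$ equals $\phi\circ R_{\phi^{-1}(a),\phi^{-1}(b)}\circ\phi^{-1}$, a composite of bijections, hence a bijection. For right distributivity, one applies $\phi^{-1}$ to both sides of identity (\ref{C1-ternary}) written for $T'$; it reduces to identity (\ref{C1-ternary}) for $T$ evaluated at the $\phi^{-1}$-images of the arguments, which holds since $(Q,T)$ is a ternary quandle. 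Thus $(R,T')$ is a ternary quandle isomorphic to $(Q,T)$ via $\phi$.

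The only substantive point is the compatibility with the prescribed associated quandle. Compute directly, using the definition of the associated quandle of $(Q,T)$ (the previous remark):
\[
T'(x,y,y)=\phi\bigl(T(\phi^{-1}(x),\phi^{-1}(y),\phi^{-1}(y))\bigr)=\phi\bigl(\phi^{-1}(x)\ast\phi^{-1}(y)\bigr).
\]
Since $\phi$ is a morphism of binary quandles, $\phi(\phi^{-1}(x)\ast\phi^{-1}(y))=\phi(\phi^{-1}(x))\ast'\phi(\phi^{-1}(y))=x\ast'y$. Hence the associated quandle of $(R,T')$ is exactly $(R,\ast')$, as required.

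I do not expect a genuine obstacle here: the statement is essentially that push-forward of a ternary quandle along a bijection is again a ternary quandle, combined with the naturality of the associated-quandle construction with respect to ternary quandle morphisms, which is precisely what makes a quandle isomorphism $(Q,\ast)\cong(R,\ast')$ lift to the ternary level. The only thing requiring care is the bookkeeping with $\phi^{-1}$ and checking that the chosen $T'$ restricts on the diagonal $y=z$ to the given $\ast'$ rather than merely to some quandle isomorphic to it — which is exactly the short computation carried out above.
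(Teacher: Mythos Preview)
Your proof is correct and follows exactly the paper's approach: the paper also defines $T'(x,y,z)=\phi(T(\phi^{-1}(x),\phi^{-1}(y),\phi^{-1}(z)))$ for an isomorphism $\phi:(Q,*)\to(R,*')$ and declares the verification easy. You have simply written out the routine checks (axioms and the diagonal computation $T'(x,y,y)=x*'y$) that the paper omits.
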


\begin{proof}
This is easily shown by setting $T'(x,y,z)=\phi(T(\phi^{-1}(x),\phi^{-1}(y),\phi^{-1}(z)))$ where $\phi:Q \rightarrow R$ is an isomorphism from $(Q,*)$ to $(R,*')$.
\end{proof}

With these facts we now see that we may limit the task of generating isomorphically distinct ternary quandles by generating them based on isomorphically distinct quandles. Additionally, it is clear that two different ternary quandles with the same associated quandle will be isomorphically distinct, simplifying our task further.

With these facts as a starting point we developed a simple program using the conditions defining a ternary quandle to compute all ternary quandles of order 3.

Since for each fixed $a,b$, the map $x \mapsto T(x,a,b)$ is a permutation, then in the following table we describe all ternary quandles of order three in terms of the columns of the Cayley table. Each column is a permutation of the elements and is described in standard notation that is by explicitly writing it in terms of products of disjoint cycles.  Thus for a given $z$ we give the permutations resulting from fixing $y=1,2,3$. For example, the ternary set $T_{21}(x,y,z)$ with the Cayley table

\begin{center}
\begin{tabular}{|c|c|c||c|c|c||c|c|c|}
\hline
\multicolumn{3}{|c||}{z=1} &
\multicolumn{3}{|c||}{z=2} &
\multicolumn{3}{|c|}{z=3} \\ \hline
1 & 2 & 3 & 2 & 1 & 1 & 3 & 1 & 1 \\ \hline
2 & 1 & 2 & 1 & 2 & 3 & 2 & 3 & 2 \\ \hline
3 & 3 & 1 & 3 & 3 & 2 & 1 & 2 & 3 \\ \hline
\end{tabular} \end{center}

will be represented with the permutations $(1),(12),(13); (12),(1),(23); (13),(23),(1)$. This will appear on the table as follows.

\footnotesize \begin{center}
\begin{tabular}{|c|c|c|c|}
\hline
$T$ & z=1 & z=2 & z=3 \\ \hline
$T_{21}$ & (1),(12),(13)& (12),(1),(23)& (13),(23),(1)\\ \hline
\end{tabular}
\end{center} \normalsize

Additionally we organize the table based on the associated quandle, given in similar permutation notation.

\footnotesize \begin{center}
\begin{tabular}{|c| *{3}{@{}c@{}|} c|*{3}{@{}c@{}|}}
\hline
\multicolumn{8}{|c|}{Ternary Distributive Sets With Associated Quandle (1),(1),(1)} \\ \hline
$T$ & z=1 & z=2 & z=3 & $T$ & z=1 & z=2 & z=3\\ \hline
$T_{0}$ & (1),(1),(1)& (1),(1),(1)& (1),(1),(1) &
$T_{1}$ & (1),(1),(1)& (1),(1),(1)& (12),(12),(1)\\ \hline
$T_{2}$ & (1),(1),(1)& (1),(1),(23)& (1),(23),(1) &
$T_{3}$ & (1),(1),(1)& (23),(1),(1)& (23),(1),(1)\\ \hline
$T_{4}$ & (1),(1),(1)& (23),(1),(23)& (23),(23),(1)&
$T_{5}$ & (1),(1),(1)& (13),(1),(13)& (1),(1),(1)\\ \hline
$T_{6}$ & (1),(1),(12)& (1),(1),(12)& (1),(1),(1) &
$T_{7}$ & (1),(1),(12)& (1),(1),(12)& (12),(12),(1)\\ \hline
$T_{8}$ & (1),(1),(123)& (123),(1),(1)& (1),(123),(1) &
$T_{9}$ & (1),(1),(132)& (132),(1),(1)& (1),(132),(1)\\ \hline
$T_{10}$ & (1),(1),(13)& (1),(1),(1)& (13),(1),(1) &
$T_{11}$ & (1),(1),(13)& (13),(1),(13)& (13),(1),(1)\\ \hline
$T_{12}$ & (1),(23),(23)& (1),(1),(1)& (1),(1),(1) &
$T_{13}$ & (1),(23),(23)& (1),(1),(23)& (1),(23),(1)\\ \hline
$T_{14}$ & (1),(23),(23)& (23),(1),(1)& (23),(1),(1) &
$T_{15}$ & (1),(23),(23)& (23),(1),(23)& (23),(23),(1)\\ \hline
$T_{16}$ & (1),(23),(23)& (13),(1),(13)& (12),(12),(1) &
$T_{17}$ & (1),(12),(1)& (12),(1),(1)& (1),(1),(1)\\ \hline
$T_{18}$ & (1),(12),(1)& (12),(1),(1)& (12),(12),(1) &
$T_{19}$ & (1),(12),(12)& (12),(1),(12)& (1),(1),(1)\\ \hline
$T_{20}$ & (1),(12),(12)& (12),(1),(12)& (12),(12),(1) &
$T_{21}$ & (1),(12),(13)& (12),(1),(23)& (13),(23),(1)\\ \hline
$T_{22}$ & (1),(123),(1)& (1),(1),(123)& (123),(1),(1) &
$T_{23}$ & (1),(123),(123)& (123),(1),(123)& (123),(123),(1)\\ \hline
$T_{24}$ & (1),(123),(132)& (132),(1),(123)& (123),(132),(1) &
$T_{25}$ & (1),(132),(1)& (1),(1),(132)& (132),(1),(1)\\ \hline
$T_{26}$ & (1),(132),(123)& (123),(1),(132)& (132),(123),(1) &
$T_{27}$ & (1),(132),(132)& (132),(1),(132)& (132),(132),(1)\\ \hline
$T_{28}$ & (1),(13),(1)& (1),(1),(1)& (1),(13),(1) &
$T_{29}$ & (1),(13),(1)& (13),(1),(13)& (1),(13),(1)\\ \hline
$T_{30}$ & (1),(13),(12)& (23),(1),(12)& (23),(13),(1) &
$T_{31}$ & (1),(13),(13)& (1),(1),(1)& (13),(13),(1)\\ \hline
$T_{32}$ & (1),(13),(13)& (13),(1),(13)& (13),(13),(1) & & & &\\ \hline
\multicolumn{8}{|c|}{Ternary Distributive Sets With Associated Quandle (1),(1),(12)} \\
\hline
$T$ & z=1 & z=2 & z=3 & $T$ & z=1 & z=2 & z=3\\ \hline
$T_{33}$ & (1),(1),(1)& (1),(1),(1)& (1),(1),(12) &
$T_{34}$ & (1),(1),(1)& (1),(1),(1)& (12),(12),(12)\\ \hline
$T_{35}$ & (1),(1),(12)& (1),(1),(12)& (1),(1),(12) &
$T_{36}$ & (1),(1),(12)& (1),(1),(12)& (12),(12),(12)\\ \hline
$T_{37}$ & (1),(12),(1)& (12),(1),(1)& (1),(1),(12) &
$T_{38}$ & (1),(12),(1)& (12),(1),(1)& (12),(12),(12)\\ \hline
$T_{39}$ & (1),(12),(12)& (12),(1),(12)& (1),(1),(12) &
$T_{40}$ & (1),(12),(12)& (12),(1),(12)& (12),(12),(12)\\ \hline
\multicolumn{8}{|c|}{Ternary Distributive Sets With Associated Quandle (23),(13),(12)} \\
\hline
$T$ & z=1 & z=2 & z=3 & $T$ & z=1 & z=2 & z=3\\ \hline
$T_{41}$ & (23),(1),(1)& (1),(13),(1)& (1),(1),(12) &
$T_{42}$ & (23),(23),(23)& (13),(13),(13)& (12),(12),(12)\\ \hline
$T_{43}$ & (23),(12),(13)& (12),(13),(23)& (13),(23),(12) &
$T_{44}$ & (23),(123),(132)& (132),(13),(123)& (123),(132),(12)\\ \hline
$T_{45}$ & (23),(132),(123)& (123),(13),(132)& (132),(123),(12) &
$T_{46}$ & (23),(13),(12)& (23),(13),(12)& (23),(13),(12)\\ \hline
\end{tabular}
\end{center} \normalsize

\subsection{Ternary distributive structures from groups}
We search for ternary distributive structures coming from groups. We have the following  necessary condition.

\begin{lemma}
Let $x,y,z$ be three fixed elements in a group $G$.  Let $w(x,y,z)=a_1^{e_1}a_2^{e_2}...a_n^{e_n}$ such that $a_i\in \{ x,y,z\}$ and $e_i=\pm1$. If $w$ is defined such that (I) $\sum^n_{i=1}e_i=1$, (II) there exists a unique $i$ such that $a_i=x$, and (III) w(x,y,z) satisfies equation (\ref{C1-ternary}) of Definition \ref{ternarydef}, then w defines a ternary quandle over the group G.
If $w$ defines a ternary quandle then $\sum^n_{i=1}e_i=1$ and $\sum_{i\in I}e_i=\pm1$ where $I=\{i : a_i=x\}$
\end{lemma}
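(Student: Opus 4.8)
The plan is to verify the three ternary-quandle axioms of Definition~\ref{ternaryquandledef} directly from the three hypotheses for the sufficiency half, and then to re-run the argument using only axioms~(2) and~(3) for the necessity half. Throughout set $T(x,y,z)=w(x,y,z)$.

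For sufficiency, axiom~(1) (right distributivity) is exactly hypothesis (III) — namely equation (\ref{C1-ternary}) — so nothing is needed there. For axiom~(3) I would compute $T(x,x,x)=w(x,x,x)=x^{e_1+\cdots+e_n}=x$, which holds by hypothesis (I). The real content is axiom~(2). Here I would use hypothesis (II): fixing $a,b\in G$ and letting $i$ be the unique index with $a_i=x$, every other letter $a_j$ lies in $\{a,b\}$, so after substitution the prefix $a_1^{e_1}\cdots a_{i-1}^{e_{i-1}}$ and the suffix $a_{i+1}^{e_{i+1}}\cdots a_n^{e_n}$ become fixed elements $P=P(a,b)$ and $S=S(a,b)$ of $G$ independent of $x$, whence $R_{a,b}(x)=w(x,a,b)=P\,x^{e_i}\,S$. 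This exhibits $R_{a,b}$ as the composite of $x\mapsto x^{e_i}$ (a bijection of $G$ since $e_i=\pm1$, being the identity or inversion), left translation by $P$, and right translation by $S$; a composite of bijections, with explicit inverse $y\mapsto (P^{-1}yS^{-1})^{e_i}$. Thus $(G,T)$ is a ternary quandle.

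For necessity, suppose $T(x,y,z)=w(x,y,z)$ defines a ternary quandle. Idempotency gives $x=T(x,x,x)=x^{\sum_i e_i}$ for all $x\in G$; evaluated on an element of infinite order this forces $\sum_{i=1}^n e_i=1$. For the second identity, put $I=\{i:a_i=x\}$ and specialize the invertible map $R_{a,b}$ at $a=b=1$: every letter $a_j$ with $j\notin I$ becomes the identity, so $R_{1,1}(x)=w(x,1,1)=x^{s}$ with $s=\sum_{i\in I}e_i$. If $I=\varnothing$ this is the constant map to $1$, and if $|s|\ge 2$ then $x\mapsto x^{s}$ is not surjective on a group with an element of infinite order; either case contradicts invertibility, so $\sum_{i\in I}e_i=\pm1$. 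Note hypothesis (III) is not needed in this direction.

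I do not expect a serious obstacle; the only delicate point is the standing convention on which groups $G$ are admitted. The necessity half genuinely requires $G$ to detect integer exponents — for instance to contain $\mathbb{Z}$, or at least an element of infinite order — since on many finite groups $x\mapsto x^{s}$ is bijective for various $s$ with $|s|\neq1$. Phrasing the lemma so that $w$ is required to yield a ternary quandle on all groups simultaneously (or on one fixed $G$ with an infinite-order element) makes both implications clean, and I would add a sentence making this convention explicit.
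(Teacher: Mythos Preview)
Your proof is correct and follows the same approach the paper indicates in its one-line justification (condition~(I) handles idempotency, condition~(II) handles invertibility of $R_{a,b}$); you simply supply far more detail, including the explicit inverse of $R_{a,b}$ and the caveat that the necessity direction needs $G$ to contain an element of infinite order (or to quantify over all $G$), a point the paper leaves implicit. The paper attributes the condition $\sum e_i=1$ to ``axiom~(1)'' whereas you derive it from axiom~(3); your attribution via idempotency is the natural one and the paper's appears to be a slip.
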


The condition $\sum^n_{i=1}e_i=1$ is a result of axiom (1) and the condition $\sum_{i\in I}e_i=\pm1$ is a result of axiom (2).

A computer aided search using the sufficient conditions given above gives us the following list of group words of lengths 3, 5 and 7. (note: Condition 1 ensures that all valid words will be of odd length.)\\

{\bf Length 3}

\begin{tabular}{l l l l}
$zxz^{-1}\qquad\qquad\;$ & $yxy^{-1}\qquad\qquad\;$ & $yz^{-1}x\qquad\qquad\;$ & $zy^{-1}x$\\
$xz^{-1}y$ & $y^{-1}xy$ & $xy^{-1}z$ & $z^{-1}xz$
\end{tabular}

{\bf Length 5}

\begin{tabular}{l l l l}
$zzxz^{-1}z^{-1} \qquad\;$ & $zyxy^{-1}z^{-1}\qquad\;$ & $zy^{-1}xyz^{-1}\qquad\;$ & $yzxz^{-1}y^{-1}$\\
$yyxy^{-1}y^{-1}$ & $yz^{-1}xzy^{-1}$ & $yz^{-1}yz^{-1}x$ & $zy^{-1}zy^{-1}x$\\
$yz^{-1}xz^{-1}y$ & $zy^{-1}xz^{-1}y$ & $y^{-1}zxz^{-1}y$ & $xz^{-1}yz^{-1}y$\\
$y^{-1}y^{-1}xyy$ & $y^{-1}z^{-1}xzy$ & $yz^{-1}xy^{-1}z$ & $zy^{-1}xy^{-1}z$\\
$z^{-1}yxy^{-1}z$ & $xy^{-1}zy^{-1}z$ & $z^{-1}y^{-1}xyz$ & $z^{-1}z^{-1}xzz$
\end{tabular}

{\bf Length 7}

\begin{tabular}{l l l l}
$zzzxz^{-1}z^{-1}z^{-1}\;$ & $zzyxy^{-1}z^{-1}z^{-1}\;$ & $zzy^{-1}xyz^{-1}z^{-1}\;$ & $zyzxz^{-1}y^{-1}z^{-1}$\\ $zyyxy^{-1}y^{-1}z^{-1}$ & $zyz^{-1}xzy^{-1}z^{-1}$ & $zy^{-1}zxz^{-1}yz^{-1}$ & $zy^{-1}y^{-1}xyyz^{-1}$\\ $zy^{-1}z^{-1}xzyz^{-1}$ & $yzzxz^{-1}z^{-1}y^{-1}$ & $yzyxy^{-1}z^{-1}y^{-1}$ & $yzy^{-1}xyz^{-1}y^{-1}$\\
$yyzxz^{-1}y^{-1}y^{-1}$ & $yyyxy^{-1}y^{-1}y^{-1}$ & $yyz^{-1}xzy^{-1}y^{-1}$ & $yz^{-1}yxy^{-1}zy^{-1}$\\ $yz^{-1}y^{-1}xyzy^{-1}$ & $yz^{-1}z^{-1}xzzy^{-1}$ & $yz^{-1}yz^{-1}yz^{-1}x$ & $zy^{-1}zy^{-1}zy^{-1}x$\\ $y^{-1}zzxz^{-1}z^{-1}y$ & $y^{-1}zyxy^{-1}z^{-1}y$ & $yz^{-1}yz^{-1}xz^{-1}y$ & $zy^{-1}zy^{-1}xz^{-1}y$\\
$yz^{-1}xz^{-1}yz^{-1}y$ & $zy^{-1}xz^{-1}yz^{-1}y$ & $xz^{-1}yz^{-1}yz^{-1}y$ & $yz^{-1}yx^{-1}yz^{-1}y$\\ $y^{-1}zy^{-1}xyz^{-1}y$ & $y^{-1}y^{-1}zxz^{-1}yy$ & $y^{-1}y^{-1}y^{-1}xyyy$ & $y^{-1}y^{-1}z^{-1}xzyy$\\ $y^{-1}z^{-1}yxy^{-1}zy$ & $y^{-1}z^{-1}y^{-1}xyzy$ & $y^{-1}z^{-1}z^{-1}xzzy$ & $z^{-1}yzxz^{-1}y^{-1}z$\\
$z^{-1}yyxy^{-1}y^{-1}z$ & $yz^{-1}yz^{-1}xy^{-1}z$ & $zy^{-1}zy^{-1}xy^{-1}z$ & $yz^{-1}xy^{-1}zy^{-1}z$\\ $zy^{-1}xy^{-1}zy^{-1}z$ & $xy^{-1}zy^{-1}zy^{-1}z$ & $zy^{-1}zx^{-1}zy^{-1}z$ & $z^{-1}yz^{-1}xzy^{-1}z$\\ $z^{-1}y^{-1}zxz^{-1}yz$ & $z^{-1}y^{-1}y^{-1}xyyz$ & $z^{-1}y^{-1}z^{-1}xzyz$ & $z^{-1}z^{-1}yxy^{-1}zz$\\
$z^{-1}z^{-1}y^{-1}xyzz$ & $z^{-1}z^{-1}z^{-1}xzzz.$ &
\end{tabular}
\normalsize

\begin{remark} {\rm 
In \cite{Maciej}, the author investigated some ternary operations coming from coloring the four regions around crossings of classical knot diagrams.  
We mention that the axioms satisfied by his ternary operations involve only four arguments while our ternary distributive operation axiom  involves five arguments (see equation (\ref{C1-ternary})).
Even though, the ternary operation of the heap of a group $T(x,y,z)=xy^{-1}z$ happened to be an example for both his operations and ours, the difference is that any permutation of the three letters $x,y,z$ in this operation is also an example in his context, while in our situation the ternary operation obtained by the transposition of $x$ and $y$ that is $T(x,y,z)=yx^{-1}z$ is not distributive operation.  This shows that his ternary operations and ours are different.  
}
\end{remark}

 \section{Constructions from Ternary bialgebras and 3-Lie algebras}\label{bialg}
 We provide in this section some constructions of ternary shelves involving ternary bialgebra structures and 3-Lie algebras.

 \subsection{ Ternary bialgebras}
We start by recalling definitions of ternary algebras, coalgebras and bialgebras. See  \cite{Borowic,Duplij,Goze,Zeko} for references about ternary bialgebras.

\begin{definition}\label{ternaryalg}  {\rm
A ternary $\K$-algebra is a triple $(A, \mu, \eta)$ where $A$ is a vector space over a field $\K$ with a multiplication $\mu: A \otimes A \otimes A \rightarrow A$  and a unit $\eta: \K \rightarrow A$ that are linear maps such that  the following associativity identity  is satisfied
\begin{equation}\label{ASSO3} \mu \circ (\mu \otimes id \otimes id)  =  \mu \circ (id \otimes \mu \otimes  id) = \mu \circ (id \otimes  id\otimes \mu)
\end{equation}
and the following property of the unit is also satisfied
$$ \mu \circ (\eta \otimes \eta \otimes id)  =  \mu \circ (\eta \otimes id \otimes  \eta) = \mu \circ (id \otimes  \eta \otimes \eta).$$

The triple $(A, \mu, \eta)$ defines a  {\it weak} ternary $\K$-algebra if, instead of identity  \eqref{ASSO3},  the following weak associativity identity holds 
\begin{equation}\label{ASSO3weak} \mu \circ (\mu \otimes id \otimes id)  =   \mu \circ (id \otimes  id\otimes \mu)
\end{equation}
}
\end{definition}
Ternary coalgebras are defined similarly by changing the directions of the arrows in the previous definition.  Precisely,
\noindent
\begin{definition}\label{ternarycoalg}  {\rm
A vector space $A$ is a ternary $\K$-coalgebra if it has a coalgebra comultiplication $\Delta$ that is a linear map $\Delta:A\rightarrow A\otimes A\otimes A $ satisfying the following  coassociativity identity:
\begin{equation}\label{COASSO3}(\Delta\otimes id \otimes id) \circ  \Delta =(id \otimes \Delta\otimes  id) \circ  \Delta=(id \otimes  id\otimes \Delta) \circ  \Delta.
\end{equation}

\noindent
The Ternary coalgebra is said to be counital if there exists a map $\varepsilon: A \rightarrow\K$ such that

$$(\varepsilon \otimes id \otimes id)\circ \Delta=(id \otimes\varepsilon \otimes  id)\circ \Delta=(id \otimes id \otimes\varepsilon )\circ \Delta.$$
}
The triple $(A, \Delta, \varepsilon)$ defines a ternary weak $\K$-coalgebra if, instead of identity  \eqref{COASSO3},  the following weak coassociativity identity holds 
\begin{equation}\label{COASSO3weak} (\Delta\otimes id \otimes id) \circ  \Delta =(id \otimes  id\otimes \Delta) \circ  \Delta.\end{equation}
\end{definition}

\noindent
A linear map $f:A \rightarrow A$ is called compatible with a comultiplication and the counit $\epsilon$  if $$\Delta\circ f=(f\otimes f\otimes f)\circ \Delta  \;\;\; \mbox{and} \; \;\; \varepsilon f=\varepsilon$$

\begin{definition}\label{ternaryBialg}  {\rm
We say that a ternary operation $\mu:A \otimes A \otimes A \rightarrow A    $ is compatible with a comultiplication $\Delta$ if and only if
\begin{equation}
\Delta\circ \mu=( \mu \otimes  \mu \otimes \mu)\circ\rho\circ( \Delta\otimes\Delta\otimes\Delta),
\end{equation}
where
$\rho:A^{\otimes 9}\rightarrow A^{\otimes 9}$ is defined in equation \eqref{RHO}. }
\end{definition}
 The following is a figure of compatibility of ternary operation with comultiplication
\vspace{-.26cm}
\small{
\[\includegraphics[width=0.7\textwidth]{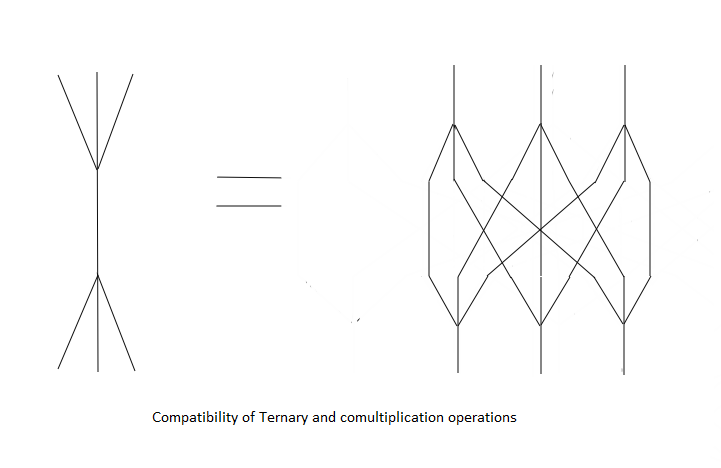}\]

\begin{definition}{\rm
A ternary bialgebra is a quintuplet $(A,\mu,\eta,\Delta, \epsilon)$ such that $(A,\mu,\eta)$ is a ternary algebra, $(A,\Delta, \epsilon)$ is a ternary coalgebra and the multiplication $\mu$ and the unit $\eta$ are coalgebra morphisms (equivalent to $\Delta$ and $\epsilon$ are algebra morphisms).
}
\end{definition}

 \begin{example}
 \begin{itemize}
 \item
     Group Algebras: { \rm Let $G$ be a (multiplicative) group, and $\K [G]$ be the group algebra.  Then $A=\K [G]$ becomes a ternary bialgebra with the multiplication defined  as $T(g,h,k)=gh^{-1}k$, the comultiplication  and counit are given respectively by $\Delta(g)=g \otimes g \otimes g$ and $\epsilon(g)=1$.}
     \item
     Function algebra on groups: { \rm  Let $G$ be a finite group.  Using $\K ^{G \times G \times G}\cong \K ^G \otimes \K^G \otimes \K^G$, the  set $\K ^G$ of functions from $G$ to $\K$ has a ternary  pointwise multiplication and a ternary comultiplication  $\Delta: \K^G \rightarrow \K^{G \times G \times G}$ given by
 $\Delta(f)(u \otimes v  \otimes w)=f(uvw)$.  Now $\K^G$ has basis (the characteristic function) $\delta_g : G
\rightarrow \K$ defined by $\delta_g(x)=1$ if $x=g $ and zero
otherwise. We have
$\displaystyle{\Delta(\delta_h)=\sum_{uvw=h} \delta_u \otimes \delta_v  \otimes \delta_w}$.}

     \end{itemize}
     
     \end{example}

\noindent
Now let $(X,T)$ be a set with a  ternary distributive operation $T$.  Consider
the vector space $V=\K [X]$ with basis the elements of $X$ and let $W=\K\oplus V$. We define $q : W \otimes W \otimes W \rightarrow W$ by linearly extending
\begin{eqnarray*}
& q(x\otimes y\otimes z)=T(x,y,z),  \quad  q(1 \otimes 1 \otimes x)=1,  \\
& q(x \otimes y \otimes 1)=  q(x \otimes 1 \otimes y)=q(x \otimes 1\otimes 1)=q(1 \otimes x \otimes y)=q( 1 \otimes x \otimes 1)=q(1\otimes 1 \otimes 1)=0.
\end{eqnarray*}
for all $x,y, z \in X$.

More explicitly, if we write  elements of $W$ in the form  $a+\sum a_x x$, where $a,a_x\in \K $ and $x\in X$, then
\begin{equation}
q((a+\sum a_x x)\otimes (b+\sum b_y y)\otimes (c+\sum c_z z))=\sum_{x,y,z}a_x b_y c_zT(x,y,z) +  \sum_z abc_z.
\end{equation}

  \begin{proposition}
  The map $q$ defined above satisfies the condition \eqref{C1-ternary} that is
   \begin{equation*}
q \circ (q \otimes id \otimes id)=q \circ (q \otimes q \otimes q)\circ \rho\circ (id \otimes    id \otimes id \otimes D \otimes D).
\end{equation*}
where $D$ is map (extended linearly) that sends $x \in X$ to $x \otimes x \otimes x \in V \otimes V \otimes V$.
  \end{proposition}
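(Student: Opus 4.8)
The plan is to reduce the stated identity of linear maps to a pointwise identity on basis tensors and then to recognize the essential case as the right distributivity of $T$. Since both $q\circ(q\otimes id\otimes id)$ and $q\circ(q\otimes q\otimes q)\circ\rho\circ(id\otimes id\otimes id\otimes D\otimes D)$ are $\K$-linear maps $W^{\otimes 5}\to W$, it suffices to check that they agree on every simple tensor $a_1\otimes a_2\otimes a_3\otimes a_4\otimes a_5$ with each $a_i$ in the basis $X\cup\{1\}$ of $W=\K\oplus V$. Because $D$ sends each basis element $a$ to $a\otimes a\otimes a$ (in particular $D(1)=1\otimes 1\otimes 1$), applying $id\otimes id\otimes id\otimes D\otimes D$ to such a tensor gives $a_1\otimes a_2\otimes a_3\otimes a_4\otimes a_4\otimes a_4\otimes a_5\otimes a_5\otimes a_5$, and then $\rho$, as described in \eqref{RHO}, rearranges this into $a_1\otimes a_4\otimes a_5\otimes a_2\otimes a_4\otimes a_5\otimes a_3\otimes a_4\otimes a_5$. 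Thus the claim is equivalent to
\[
q\bigl(q(a_1\otimes a_2\otimes a_3)\otimes a_4\otimes a_5\bigr)=q\bigl(q(a_1\otimes a_4\otimes a_5)\otimes q(a_2\otimes a_4\otimes a_5)\otimes q(a_3\otimes a_4\otimes a_5)\bigr)
\]
for all basis elements $a_1,\dots,a_5\in X\cup\{1\}$.

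Next I would dispose of the principal case $a_1,a_2,a_3,a_4,a_5\in X$. Here $q(a_1\otimes a_2\otimes a_3)=T(a_1,a_2,a_3)\in X$ and $q(a_i\otimes a_4\otimes a_5)=T(a_i,a_4,a_5)\in X$ for $i=1,2,3$, so the displayed identity becomes
\[
T\bigl(T(a_1,a_2,a_3),a_4,a_5\bigr)=T\bigl(T(a_1,a_4,a_5),T(a_2,a_4,a_5),T(a_3,a_4,a_5)\bigr),
\]
which is precisely the right distributivity \eqref{C1-ternary} satisfied by $T$. It then remains to handle the degenerate tensors, namely those in which at least one $a_i$ equals the unit $1$. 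For these I would invoke the defining values of $q$: a simple tensor having a tensor factor equal to $1$ is sent to $0$ by $q$ unless it has the form $1\otimes 1\otimes x$ with $x\in X$. Running through the possibilities according to which of $a_1,a_2,a_3$ and which of $a_4,a_5$ equal $1$, one verifies in each subcase that the two sides of the reformulated identity agree; these verifications use only linearity and the definition of $q$, and no further property of $T$.

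The genuinely laborious part is the bookkeeping in the degenerate cases, where one must keep track of how $D$ duplicates the parameters $a_4$ and $a_5$ and how $\rho$ interleaves them with $a_1,a_2,a_3$, and then compare the vanishing patterns of $q$ on the two sides; I expect this to be the main obstacle, although it is entirely routine. A convenient organization is to split first according to whether $\{a_4,a_5\}\subseteq X$, exactly one of $a_4,a_5$ equals $1$, or $a_4=a_5=1$, and within each of these according to the pattern of units among $a_1,a_2,a_3$; with the identity written in the reformulated elementwise form above, each of these finitely many subcases reduces to a one-line evaluation. Once the principal case is settled by \eqref{C1-ternary} and all degenerate subcases are checked, the proposition follows.
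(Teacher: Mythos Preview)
Your overall strategy coincides with the paper's: both arguments verify the identity directly, the substantive ingredient being the right distributivity of $T$ when all five arguments lie in $X$. The paper expands both sides on a general element of $W^{\otimes 5}$ using the closed formula for $q$ and matches the two resulting expressions; you instead reduce to basis tensors in $X\cup\{1\}$ and announce that the degenerate subcases are routine. These are two packagings of the same computation.

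There is, however, a real gap in your handling of the degenerate cases. You assume $D(1)=1\otimes1\otimes1$, but under that assumption the subcase $a_1=a_2=1$, $a_3=z\in X$, $a_4=1$, $a_5=v\in X$ does \emph{not} check: the left side is $q\bigl(q(1,1,z),1,v\bigr)=q(1,1,v)=1$, while the right side is $q\bigl(q(1,1,v),q(1,1,v),q(z,1,v)\bigr)=q(1,1,0)=0$, because $q(z\otimes1\otimes v)=0$ by definition. So your assertion that ``one verifies in each subcase that the two sides \ldots\ agree'' is not correct as stated. The way out is to read the proposition literally: $D$ is only defined on $V=\K[X]$ (it sends $x\in X$ to $x^{\otimes3}$ and is extended linearly), so the equality of maps is asserted on $W^{\otimes3}\otimes V^{\otimes2}$, i.e., with $a_4,a_5\in X$. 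With that restriction your case analysis goes through and every degenerate subcase indeed yields $0=0$. (The paper's own calculation has the same slippage: it plugs in fourth and fifth arguments with scalar parts $d,f$ and records a term $\sum_{z,v}abc_z\,df_v$ on the right-hand side that does not actually arise there.)
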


  \begin{proof}
  By calculation, the LHS of this equation is
  \begin{multline*}
  q(q\otimes id \otimes id)((a+\sum a_x x)\otimes (b+\sum b_y y)\otimes (c+\sum c_z z)\otimes (d+\sum_u d_u u) \otimes (f+\sum_v f_v v) )\\
  =q(\sum_z abc_z+\sum_{x,y,z}a_x b_y c_zT(x,y,z)\otimes (d+\sum_u d_u u) \otimes (f+\sum_v f_v v) )\\
  =\sum_{z,v} abc_zdf_v + \sum_{x,y,z,u,v}a_x b_y c_z d_u f_v T(T(x,y,z),u,v).
  \end{multline*}
  From the RHS we obtain
  \begin{multline*}
  q  (q \otimes q \otimes q) \rho  (id \otimes    id \otimes id \otimes D \otimes D)
((a+\sum a_x x)\otimes (b+\sum b_y y)\otimes (c+\sum c_z z)\otimes (d+\sum_u d_u u) \otimes (f+\sum_v f_v v) )\\
  =\sum_{z,v} abc_zdf_v + \sum_{x,y,z,u,v}a_x b_y c_z d_u f_v T(T(x,u,v),T(y,u,v),T(z,u,v)).    \end{multline*}
  This ends the proof.
 \end{proof}

  \begin{proposition}
  The map $q$ is compatible with the comultiplication and not compatible with the counit obtained by linearly extending $\varepsilon(1)=1$.
  \end{proposition}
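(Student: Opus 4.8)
The plan is to reduce both assertions to the action of $q$, of the comultiplication $\Delta$ on $W=\K\oplus V$, and of the shuffle $\rho$ on basis vectors. Recall that $\Delta$ is the linear extension of $w\mapsto w\otimes w\otimes w$ for $w\in\{1\}\cup X$, so that $1$ and every $x\in X$ are group-like, while the counit $\varepsilon$ satisfies $\varepsilon(1)=1$. Since the compatibility identity of Definition~\ref{ternaryBialg}, which for $q$ reads $\Delta\circ q=(q\otimes q\otimes q)\circ\rho\circ(\Delta\otimes\Delta\otimes\Delta)$, is trilinear in its three arguments, it suffices to verify it on basis triples $w_1\otimes w_2\otimes w_3$ with each $w_i\in\{1\}\cup X$. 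I would also record the two elementary facts that do all the work: (i) by the definition of $q$, the element $q(w_1\otimes w_2\otimes w_3)$ always lies in $\{0\}\cup\{1\}\cup X$, being $T(x,y,z)$ when $(w_1,w_2,w_3)\in X^{\times 3}$, being $1$ when $(w_1,w_2,w_3)=(1,1,x)$ with $x\in X$, and being $0$ in every other case; and (ii) for every $w\in\{0,1\}\cup X$ one has $\Delta(w)=w\otimes w\otimes w$ (with $\Delta(0)=0$), so that $\Delta\big(q(w_1\otimes w_2\otimes w_3)\big)=q(w_1\otimes w_2\otimes w_3)^{\otimes 3}$ for every basis triple.

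For the right-hand side I would use the explicit formula \eqref{RHO}: the map $\rho$ carries the three consecutive blocks of indices $(1,2,3),(4,5,6),(7,8,9)$ to the interleaved blocks $(1,4,7),(2,5,8),(3,6,9)$, so that $\rho\big(w_1^{\otimes 3}\otimes w_2^{\otimes 3}\otimes w_3^{\otimes 3}\big)=(w_1\otimes w_2\otimes w_3)^{\otimes 3}$; consequently $(q\otimes q\otimes q)\circ\rho\circ(\Delta\otimes\Delta\otimes\Delta)$ sends $w_1\otimes w_2\otimes w_3$ to $q(w_1\otimes w_2\otimes w_3)^{\otimes 3}$. By (i) and (ii) this equals $\Delta\big(q(w_1\otimes w_2\otimes w_3)\big)$, so the compatibility identity holds on every basis triple and hence, by trilinearity, on all of $W^{\otimes 3}$; this establishes compatibility with the comultiplication. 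For the counit a single triple suffices: on $1\otimes 1\otimes 1$ the definition of $q$ gives $q(1\otimes 1\otimes 1)=0$, hence $\varepsilon\big(q(1\otimes 1\otimes 1)\big)=0$, whereas $(\varepsilon\otimes\varepsilon\otimes\varepsilon)(1\otimes 1\otimes 1)=\varepsilon(1)^{3}=1\neq 0$ in $\K$; thus $\varepsilon\circ q\neq\varepsilon\otimes\varepsilon\otimes\varepsilon$ and $q$ is not compatible with the counit.

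I do not expect a genuine obstacle here: the only steps requiring care are the index bookkeeping for $\rho\circ(\Delta\otimes\Delta\otimes\Delta)$ and the short case distinction packaged into fact (i). Conceptually, compatibility with $\Delta$ holds because $q$ maps every basis triple to a group-like element or to $0$ — exactly the elements $w\in W$ for which $\Delta w=w\otimes w\otimes w$ — while compatibility with $\varepsilon$ fails solely because $q$ annihilates $1\otimes 1\otimes 1$.
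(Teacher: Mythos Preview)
Your proof is correct and follows essentially the same approach as the paper: both verify the compatibility identity $\Delta\circ q=(q\otimes q\otimes q)\circ\rho\circ(\Delta\otimes\Delta\otimes\Delta)$ directly, using that $q$ sends basis triples to group-like elements (or $0$) and that $\rho$ interleaves the three diagonal blocks. Your reduction to basis triples via trilinearity is in fact cleaner than the paper's version, which writes the identity on general elements $(a+\sum a_x x)\otimes\cdots$ and asserts $D\circ q(\cdots)=q(\cdots)^{\otimes 3}$ without first restricting to group-likes; your formulation avoids that looseness. For the counit, the paper computes $\varepsilon\circ q$ on a general element and observes the discrepancy, whereas you exhibit the single witness $1\otimes 1\otimes 1$; both arguments establish $\varepsilon\circ q\neq \varepsilon^{\otimes 3}$.
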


  \begin{proof}
  Again by calculation
  \begin{multline}
D\circ q((a+\sum a_x x)\otimes (b+\sum b_y y)\otimes (c+\sum c_z z))\\
  =q((a+\sum a_x x)\otimes (b+\sum b_y y)\otimes (c+\sum c_z z))^3\\
  =(q\otimes q\otimes q)\circ\rho\circ( D\otimes D\otimes D)\circ((a+\sum a_x x) \otimes (b+\sum b_y y)\otimes (c+\sum c_z z)).
  \end{multline}
  Thus we see $q$ is compatible with comultiplication.

\noindent
  As for composition with the counit we see
  \begin{equation}
  (\varepsilon\circ q) [(a+\sum a_x x)\otimes (b+\sum b_y y)\otimes (c+\sum c_z z)]=\sum_z abc_z+\sum_{x,y,z}a_x b_y c_z.
  \end{equation}
  This implies that $\varepsilon\circ q \neq \varepsilon,$ so that the map is not compatible with the counit.
  \end{proof}

 \subsection{$3$-Lie algebras}
In the following we recall the definition of a $3$-Lie algebra and show how to derive a ternary distributive operation from it. For further properties and results about $3$-Lie algebras, we refer the reader to  \cite{AMS1,AMM,AMS2,aip:review,Filippov:nLie,Okubo1,Okubo2}.
 
 \begin{definition}
  A $3$-Lie algebra is a $\K$-vector space $L$  together with  a skewsymmetric  ternary operation $[\cdot ,\cdot  ,\cdot   ]$  satisfying
  \begin{equation*}\label{TernaryNambuIdentityOrdinary}
  [ [x_{1},x_{ 2}, x_{3}],x_{4},x_{5}] =
  [ [x_1,x_4,x_5 ],x_{2},x_{3}]
  + [ x_1,[ x_2,x_4,x_{5}] , x_{3}]+
  [x_1,x_{2},[x_3,x_4,x_{5}]].
\end{equation*}
\end{definition}
Given a $3$-Lie algebra $L$, we can construct a ternary coalgebra  $N=\K\oplus L$ by setting for all $ x \in L$,
\begin{equation}\label{comultip}
\Delta(x)=x\otimes 1\otimes 1+1\otimes x\otimes 1+1\otimes 1\otimes x, \  \Delta(1)=1\otimes 1\otimes 1, \ \varepsilon(1)=1 \text{ and } \varepsilon(x)=0.
\end{equation}

  We define a linear map $T: N \otimes N \otimes N \rightarrow N$, for all  $x,y,z\in L,$ by
  \begin{eqnarray*}
&&T(x\otimes y\otimes z)=[x,y,z], \quad T(1\otimes 1\otimes 1)=1, \quad T(x\otimes 1\otimes 1)=x, \\
&& T(x\otimes 1\otimes y)=q(1\otimes x\otimes 1)=T(x\otimes y\otimes 1)=T(1 \otimes x\otimes y)=T(1\otimes 1\otimes x)=0.
\end{eqnarray*}

That is  for
$a, b,c \in \K$
$$
q((a+x ) \otimes (b+y) \otimes (c+z) ) = abc + bcx + [x,y,z]
= (abc, bcx + [x,y,z]).$$

\noindent
As in the previous situation we have
\begin{proposition}
The map $T$ defined above satisfies the ternary distributive condition \eqref{C1-ternary}.
  \end{proposition}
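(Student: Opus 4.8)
The plan is to verify the distributivity identity \eqref{C1-ternary} for $T$ by reducing it to two facts that are already available: the analogous statement for the map $q$ built from a ternary distributive set (the proposition proved above), and the $3$-Lie algebra (Filippov/Nambu) identity itself. The key observation is that the bracket $[\cdot,\cdot,\cdot]$ on the $3$-Lie algebra $L$ is \emph{not} right distributive in the sense of \eqref{C1-ternary} — instead it satisfies the Leibniz-type identity $[[x_1,x_2,x_3],x_4,x_5]=\sum_i [x_1,\dots,[x_i,x_4,x_5],\dots,x_3]$. The point of passing to $N=\K\oplus L$ and twisting the bracket by the scalar factor, i.e.\ setting $T((a+x)\otimes(b+y)\otimes(c+z))=abc+bcx+[x,y,z]$, is precisely that this twist converts the Leibniz identity into a genuine distributivity identity, exactly as a Lie algebra gives a rack via $x*y=$ (something using $\exp$) or, more to the point, the way the enveloping-algebra/heap constructions linearize.

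First I would write both sides of \eqref{C1-ternary} for a general triple of elements $a+x$, $b+y$, $c+z$, $d+u$, $f+v$ with $a,b,c,d,f\in\K$ and $x,y,z,u,v\in L$, expanding by multilinearity. Because $T$ kills every tensor in which a $\K$-summand sits in the first slot with at least one $L$-summand to its right in a ``wrong'' pattern (the explicit vanishing relations above), most of the $2^{5}$ terms drop out. I expect the surviving terms on the left-hand side $T(T((a+x)\otimes(b+y)\otimes(c+z))\otimes(d+u)\otimes(f+v))$ to collect into $abcdf + cdf\,x + df\,[x,y,z] + bc\,[x,u,v]$-type contributions, while the right-hand side $T\big(T(\cdots)\otimes T(\cdots)\otimes T(\cdots)\big)$ with the inner arguments $(x,u,v),(y,u,v),(z,u,v)$ produces, after the same annihilations, the scalar part $abcdf$ plus a linear part that is exactly $\sum$ over the three ``positions'' of the bracket. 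Matching the linear-in-$L$ parts is then \emph{verbatim} the Filippov identity; matching the purely scalar and the mixed scalar terms is a short bookkeeping check.

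An alternative, cleaner route — which I would actually prefer to present — is to avoid recomputing anything: observe that the construction of $T$ here is the $3$-Lie analogue of the construction of $q$ from a ternary distributive set, and that the proposition above already shows that $q$ (equivalently, any ternary operation satisfying \eqref{C1-ternary} on the basis, extended with the given unit conventions) yields the identity on $W=\K\oplus V$. Concretely, the only place the distributivity of the underlying operation $T(x,y,z)=[x,y,z]$ entered the proof of that proposition was in equating $T(T(x,y,z),u,v)$ with $T(T(x,u,v),T(y,u,v),T(z,u,v))$ at the level of basis elements; here the corresponding equality is not an identity on $L$ but becomes one after absorbing the scalar part, and that absorption \emph{is} the Filippov identity rearranged. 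So the proof is: reduce \eqref{C1-ternary} on $N$ to its scalar part (trivial) plus its $L$-part, and identify the $L$-part with the $3$-Lie algebra axiom.

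The main obstacle is purely organizational rather than conceptual: one must be careful that the vanishing conventions for $T$ on mixed tensors are consistent enough that the ``garbage'' terms on the two sides cancel in the same way — in particular checking that terms such as $T([x,y,z]\otimes 1\otimes v)$, $T(1\otimes[y,u,v]\otimes\cdots)$ and the like are all set to $0$ by the stated rules, so that no spurious term survives on one side only. Once the annihilation pattern is pinned down, the residual equality is the Filippov identity and there is nothing left to prove. I would therefore structure the write-up as: (1) expand both sides; (2) apply the vanishing rules and the unit rule $T(x\otimes 1\otimes 1)=x$ to discard/simplify terms; (3) observe the scalar parts agree; (4) observe the $L$-parts agree by the $3$-Lie identity.
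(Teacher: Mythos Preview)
Your direct plan --- expand both sides via $T((a+x)\otimes(b+y)\otimes(c+z))=abc+bcx+[x,y,z]$, kill the mixed terms with the stated vanishing rules, and recognise the surviving $L$-component equality as the Filippov identity --- is correct and is exactly what the paper does (its proof is the one-liner ``expand and equate'').

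One point does need tightening. The identity being verified here is the coalgebra form
\[
T\circ(T\otimes id\otimes id)=T\circ(T\otimes T\otimes T)\circ\rho\circ(id^{\otimes 3}\otimes\Delta\otimes\Delta),
\]
with the \emph{primitive} comultiplication $\Delta(x)=x\otimes1\otimes1+1\otimes x\otimes1+1\otimes1\otimes x$ of \eqref{comultip}, \emph{not} the set-like diagonal $D$ used in the $q$-proposition. (The paper's proof writes $D$, but $\Delta$ is what is meant.) The literal element-wise formula \eqref{C1-ternary} is not multilinear in the last two arguments and actually fails on $N$: repeating $(d+u)$ and $(f+v)$ three times produces terms cubic in $d,f$ with no counterpart on the LHS. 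So your description of the RHS as having ``inner arguments $(x,u,v),(y,u,v),(z,u,v)$'' is the wrong picture. With $\Delta$, the RHS is a sum over the nine Sweedler terms of $\Delta(d+u)\otimes\Delta(f+v)$; six of them vanish by the rules you list, and what survives is
\[
abcdf+bcdf\,x+bc\,[x,u,v]+df\,[x,y,z]+[[x,u,v],y,z]+[x,[y,u,v],z]+[x,y,[z,u,v]].
\]
Comparing with the LHS, which is $abcdf+bcdf\,x+bc\,[x,u,v]+df\,[x,y,z]+[[x,y,z],u,v]$, the residual equality is literally the $3$-Lie identity, as you anticipated. (Note also your sketched LHS coefficient $cdf\,x$ should be $bcdf\,x$, and the term $[[x,y,z],u,v]$ is missing from your list.)

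For the same reason your ``alternative, cleaner route'' cannot be run as a black-box appeal to the $q$-proposition: that result is about $D$, and the bracket on $L$ does not satisfy \eqref{C1-ternary} on the nose. The two computations are parallel in spirit but genuinely different; once you replace $D$ by $\Delta$ and carry out the nine-term expansion, your plan is complete.
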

  
  \begin{proof}
  In a similar manner to the proof of the previous theorem, we write explicitly LHS and RHS, 
  \begin{multline*}
 LHS= T(T\otimes id \otimes id)((a+\sum a_x x)\otimes (b+\sum b_y y)\otimes (c+\sum c_z z)\otimes (d+\sum_u d_u u) \otimes (f+\sum_v f_v v) )\\
 RHS= T (T \otimes T \otimes T) \rho  (id \otimes    id \otimes id \otimes D \otimes D)
((a+\sum a_x x)\otimes (b+\sum b_y y)\otimes (c+\sum c_z z)\otimes (d+\sum_u d_u u) \otimes (f+\sum_v f_v v) )\\ 
   \end{multline*}
  By expanding and equating them we obtain the result.  
 \end{proof}
 \noindent
 A direct verification shows that this map $T$ also satisfies the equation $\Delta T=(T\otimes T \otimes T) \rho (D  \otimes D \otimes D)$ giving the following
 \begin{proposition}
The map $T$ defined above is a coalgebra homomorphism.
  \end{proposition}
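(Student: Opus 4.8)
The plan is to unwind the definition of \textbf{coalgebra homomorphism}: $T$ is one exactly when it intertwines the comultiplications and counits of the ternary coalgebras $N^{\otimes 3}$ and $N$, i.e.
\[
\Delta\circ T=(T\otimes T\otimes T)\circ\rho\circ(\Delta\otimes\Delta\otimes\Delta)
\qquad\text{and}\qquad
\varepsilon\circ T=\varepsilon\otimes\varepsilon\otimes\varepsilon ,
\]
with $\rho$ as in \eqref{RHO} and $\Delta,\varepsilon$ as in \eqref{comultip}. Since every map in sight is linear, it suffices to verify both identities on the natural basis of $N^{\otimes 3}=(\K\oplus L)^{\otimes 3}$, namely tensors $e_1\otimes e_2\otimes e_3$ in which each $e_i$ is either $1$ or a basis vector of $L$. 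I would organize the check by the subset $S\subseteq\{1,2,3\}$ of the slots occupied by an element of $L$, giving eight cases.

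The counit identity is immediate case by case. If $S=\emptyset$ then $T(1\otimes1\otimes1)=1$ and $\varepsilon(1)=1=\varepsilon^{\otimes 3}(1\otimes1\otimes1)$. In every other case $\varepsilon^{\otimes 3}(e_1\otimes e_2\otimes e_3)=0$ because some $\varepsilon(e_i)=0$, while $\varepsilon(T(e_1\otimes e_2\otimes e_3))=0$ because $T$ of the given tensor is either $0$ or an element of $L$ (it is $x$ when $S=\{1\}$ and $[x,y,z]$ when $S=\{1,2,3\}$), and $\varepsilon$ vanishes on $L$.

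For the comultiplication identity the decisive case is $S=\{1,2,3\}$, with input $x\otimes y\otimes z$, $x,y,z\in L$. Here I would expand $(\Delta\otimes\Delta\otimes\Delta)(x\otimes y\otimes z)$ into its $27$ summands using $\Delta(w)=w\otimes1\otimes1+1\otimes w\otimes1+1\otimes1\otimes w$ for $w\in\{x,y,z\}$, apply $\rho$ to regroup the nine tensor factors into three consecutive blocks (``columns''), so that column $\ell$ becomes the tensor whose $k$-th factor is the $\ell$-th factor of $\Delta$ applied to the $k$-th input, and then use that $T\otimes T\otimes T$ kills a summand unless each of its three columns lies among the tensors on which $T$ is nonzero, namely $1\otimes1\otimes1$, $a\otimes1\otimes1$ with $a\in L$, or $a\otimes b\otimes c$ with $a,b,c\in L$. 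Reading off where $x,y,z$ land, one notes that the column containing $y$ (respectively $z$) has its second (respectively third) factor in $L$ while its other factors lie in $\{1\}\cup L$, so non-vanishing of $T$ on it forces that column to be $x\otimes y\otimes z$; hence the only surviving summands are the three in which $x,y,z$ all land in a single column, which then carries the value $[x,y,z]$ while the other two columns are $1\otimes1\otimes1$. Summing these three gives $[x,y,z]\otimes1\otimes1+1\otimes[x,y,z]\otimes1+1\otimes1\otimes[x,y,z]=\Delta([x,y,z])=\Delta(T(x\otimes y\otimes z))$, as needed. The remaining seven cases are simpler: for $S=\emptyset$ both sides reduce to $1\otimes1\otimes1$, for $S=\{1\}$ both sides reduce to $\Delta(x)$, and for each $S\in\{\{2\},\{3\},\{1,2\},\{1,3\},\{2,3\}\}$ the left side is $\Delta(0)=0$ while, after the $\rho$-reshuffle, every summand on the right has a column lying outside the above list of tensors supporting $T$ and is therefore annihilated by $T\otimes T\otimes T$, so the right side is $0$ too.

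The only real obstacle is the bookkeeping: tracking the $27$-term expansion through the shuffle $\rho$ of \eqref{RHO} and confirming the vanishing pattern of $T\otimes T\otimes T$ on the reshuffled tensors. The conceptual reason the statement holds --- and why the analogous map $q$ of the previous subsection is not a coalgebra homomorphism --- is that $T$ sends a triple of elements of $L$ back into $L$, in accordance with the primitive-type comultiplication $\Delta$ of \eqref{comultip}, together with the asymmetric normalization $T(x\otimes1\otimes1)=x$ and $T(1\otimes x\otimes y)=T(1\otimes1\otimes x)=0$.
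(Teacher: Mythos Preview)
Your proof is correct and follows the same route the paper indicates: the paper offers only the single line ``A direct verification shows that this map $T$ also satisfies the equation $\Delta T=(T\otimes T\otimes T)\rho(D\otimes D\otimes D)$'' (where $D$ is evidently a typo for $\Delta$), and you carry out precisely that verification on the basis of $N^{\otimes 3}$, with the key observation that in the $27$-term expansion only the three ``diagonal'' summands $i=j=k$ survive. Your additional check of the counit identity is also correct and is needed for $T$ to be a coalgebra homomorphism in the full sense, even though the paper does not state it explicitly.
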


\subsection{Distributive ternary bialgebras}
We introduce here a notion of distributive ternary bialgebra.
\begin{definition}{\rm
A ternary bialgebra (resp. ternary weak bialgebra) is a triple $(A,T,\Delta )$,  where $A$ is a vector space, $T:A^{\otimes 3}\rightarrow A$ a ternary operation, $\Delta:A\rightarrow A^{\otimes 3}$ a ternary comultiplication,  such that 
\begin{enumerate}
\item the operation $T$ is distributive, meaning it satisfies equation  \eqref{C1-ternary},
\item the comultiplication $\Delta$ is  coassociative (resp. weak coassociative),
\item the maps $T$ and $\Delta$ are compatible, that is
\begin{equation}
\Delta\circ T=( T \otimes  T \otimes T)\circ\rho\circ( \Delta\otimes\Delta\otimes\Delta).
\end{equation}
\end{enumerate}
}
\end{definition}

It turns out that any  ternary operation $T$  over   a set $X$  endows $\K [X]$ with a structure of distributive ternary bialgebra with a comultiplication $D$ defined on the generators as $D(x)=x\otimes x\otimes x$ for all $x\in X$, and then extended linearly. Indeed, $D$ is coassociative and compatible with $T$. 
We have for $x,y,z\in X$
\begin{eqnarray*}
 ( T \otimes  T \otimes T)\circ\rho\circ( D \otimes D\otimes D)(x\otimes y\otimes z)& =&( T \otimes  T \otimes T)(\rho(x\otimes x\otimes x\otimes y\otimes y\otimes y\otimes z \otimes z\otimes z)))\\
&=&
( T \otimes  T \otimes T)(x\otimes y\otimes z\otimes x\otimes y\otimes z\otimes x\otimes y\otimes z)\\
&=&
D\circ T(x\otimes y\otimes z).
\end{eqnarray*}

\section{Differentials and Cohomology}
In this section, we provide a cohomology theory of distributive ternary bialgebras that fits with a deformation theory of ternary distributive operations.

Let $(Q,T)$ be a ternary distributive set, $\K$ be an algebraically closed field of
characteristic zero and let $A=\K[Q]$ be the vector space spanned by the elements of $Q$. We extend by bilinearity the ternary distributive operation $T$ to $A$. \\
We define low dimensional cochain groups by
$$\mathcal{C}^1:=Hom(A,A), \quad \mathcal{C}^2:= Hom(A^{\otimes 3} ,A) \oplus Hom(A,A^{\otimes 3}),$$ and $$\mathcal{C}^3:=Hom(A^{\otimes 5} ,A)  \oplus Hom(A^{\otimes 3},A^{\otimes 3}) \oplus Hom(A,A^{\otimes 5}).$$

\noindent
{\bf First differentials:}
  For $f \in \mathcal{C}^1$, we define  \begin{eqnarray*}
\delta_{m} ^1(f):= T (f \otimes id \otimes id)+T (id \otimes f \otimes id) + T(id \otimes id \otimes f) - f T,
\end{eqnarray*}
and
 \begin{eqnarray*}
\delta_{c} ^1(f):=  (f \otimes id \otimes id)  \Delta +  (id \otimes f \otimes id)  \Delta +  ( id \otimes id \otimes f)  \Delta - \Delta f,
\end{eqnarray*}
The differential $\mathcal{D}^1$ is given by $$\mathcal{D}^1(f):= \delta_{m} ^1(f) -  \delta_{c} ^1(f): \mathcal{C}^1=Hom(A,A) \rightarrow Hom(A^{\otimes 3} ,A) \oplus Hom(A,A^{\otimes 3})=\mathcal{C}^2.$$
{\bf Second differentials:} we define the second differentials for $\psi_1 \in\mathcal{C}^1$ and $\psi_2 \in\mathcal{C}^2$ by

\begin{eqnarray*}
 d^{2,1}(\psi_1,\psi_2) &=& [{\psi_1} ({T} \otimes id \otimes id ) + {T} (\psi_1 \otimes id \otimes id ) ]- [\psi_1 ( {T} \otimes  {T} \otimes {T})\circ\rho\circ( id \otimes id \otimes id  \otimes {\Delta}  \otimes {\Delta})  +\\
&&  {T}( \psi_1 \otimes  {T} \otimes {T})\circ\rho\circ( id \otimes id \otimes id  \otimes {\Delta}  \otimes {\Delta}) +  {T}( {T} \otimes  \psi_1 \otimes {T})\circ\rho\circ( id \otimes id \otimes id  \otimes {\Delta}  \otimes {\Delta}) + \\
&&  {T}( {T} \otimes  {T} \otimes \psi_1)\circ\rho\circ( id \otimes id \otimes id  \otimes {\Delta}  \otimes {\Delta}) +  {T}( {T} \otimes  {T} \otimes {T})\circ\rho\circ( id \otimes id \otimes id  \otimes \psi_2   \otimes {\Delta}) +\\
&& {T}( {T} \otimes  {T} \otimes {T})\circ\rho\circ( id \otimes id \otimes id  \otimes {\Delta}  \otimes \psi_2 ) ],\\
 d^{2,2}(\psi_1,\psi_2)  &=& [ \psi_2 T  + \Delta \psi_1  ]- [( \psi_1 \otimes  T \otimes T)\circ\rho\circ( \Delta  \otimes \Delta  \otimes \Delta) + ( T \otimes  \psi_1 \otimes T)\circ\rho\circ( \Delta  \otimes \Delta  \otimes \Delta) +\\
 && ( T \otimes  T \otimes \psi_1)\circ\rho\circ( \Delta  \otimes \Delta  \otimes \Delta) +( T \otimes  T \otimes T)\circ\rho\circ( \psi_2  \otimes \Delta  \otimes \Delta) +\\
 && ( T \otimes  T \otimes T)\circ\rho\circ( \Delta  \otimes \psi_2 \otimes \Delta) + ( T \otimes  T \otimes T)\circ\rho\circ( \Delta  \otimes \Delta  \otimes \psi_2) ],\\
  d^{2,3}(\psi_1,\psi_2)  &=& [ (\psi_2 \otimes  id \otimes id ) \Delta + (\Delta \otimes  id \otimes id ) \psi_2 ] - [( id \otimes id  \otimes \psi_2) \Delta + ( id \otimes id  \otimes \Delta) \psi_2].
\end{eqnarray*}
Remark that this last formula of  $d^{2,3}(\psi_1,\psi_2)$ involves only $\psi_2$ and it corresponds to the co-Hochschild  $2$-differential of the ternary comultiplication.\\
The differential $\mathcal{D}^2$ is given by
$$\mathcal{D}^2(\psi_1,\psi_2):=  d^{2,1}(\psi_1,\psi_2) + d^{2,2}(\psi_1,\psi_2) +  d^{2,3}(\psi_1,\psi_2). $$

\begin{proposition}{\it
The composite $\mathcal{D} ^{2}\circ \mathcal{D}^{1}$ is zero.}
\end{proposition}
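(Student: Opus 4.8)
The plan is to verify directly that $\mathcal{D}^2 \circ \mathcal{D}^1 = 0$ by decomposing both differentials along the direct-sum structure of the cochain groups. Write $\mathcal{D}^1(f) = (\delta_m^1(f), -\delta_c^1(f)) \in \mathcal{C}^2 = \Hom(A^{\otimes 3},A) \oplus \Hom(A,A^{\otimes 3})$, and recall $\mathcal{D}^2 = d^{2,1} + d^{2,2} + d^{2,3}$ with target $\Hom(A^{\otimes 5},A) \oplus \Hom(A^{\otimes 3},A^{\otimes 3}) \oplus \Hom(A,A^{\otimes 5})$. Thus I must show that each of the three components $d^{2,i}\bigl(\mathcal{D}^1(f)\bigr)$ vanishes for every $f \in \mathcal{C}^1$. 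The key structural observation that makes this tractable is that the formulas separate into a ``multiplicative'' part built from $T$ and the distributivity relation \eqref{C1-ternary} (equivalently \eqref{eq2}), and a ``comultiplicative'' part built from $\Delta$ and coassociativity \eqref{COASSO3}, coupled only through the compatibility relation $\Delta \circ T = (T\otimes T\otimes T)\circ \rho \circ (\Delta\otimes\Delta\otimes\Delta)$.

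First I would handle the $d^{2,3}$ component, which is the easiest: since $d^{2,3}(\psi_1,\psi_2)$ depends only on $\psi_2$ and is precisely the co-Hochschild $2$-differential of the ternary comultiplication, the identity $d^{2,3}\bigl(-\delta_c^1(f)\bigr) = 0$ is just the statement that the co-Hochschild differential squares to zero on the coalgebra $(A,\Delta)$; this follows by the same bookkeeping as the classical Hochschild/co-Hochschild $d^2 = 0$, using coassociativity \eqref{COASSO3} to cancel terms in pairs. Next I would treat $d^{2,1}\bigl(\mathcal{D}^1(f)\bigr)$: here I substitute $\psi_1 = \delta_m^1(f)$ and $\psi_2 = -\delta_c^1(f)$ into the (long) formula for $d^{2,1}$, expand everything in terms of $f$, $T$, $\Delta$, $id$ and $\rho$, and show the resulting sum collapses. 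The terms involving $T$ and $f$ only (no $\Delta$) should cancel by the shelf identity \eqref{C1-ternary} for $T$ exactly as a Hochschild-type $d^2 = 0$ for the ``distributive'' complex of $T$ alone; the cross terms, where $\delta_c^1(f)$ enters through $\psi_2$ and $\delta_m^1(f)$ enters with copies of $\Delta$, must cancel against each other using the compatibility relation between $T$ and $\Delta$ to rewrite $\Delta \circ T$ and $T$ composed with $\Delta$'s in a common normal form. Finally, $d^{2,2}\bigl(\mathcal{D}^1(f)\bigr) = 0$ is verified the same way, now with the genuinely mixed differential: expand $\psi_2 T + \Delta\psi_1$ with $\psi_2 = -\delta_c^1(f)$, $\psi_1 = \delta_m^1(f)$, push all $\Delta$'s past $T$'s via compatibility, and match against the six $\rho$-permuted terms, again using \eqref{C1-ternary} and \eqref{COASSO3}.

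The main obstacle will be purely combinatorial: keeping track of the many terms in $d^{2,1}$ and $d^{2,2}$ after substitution, and in particular matching the index permutation $\rho$ correctly when a copy of $\Delta$ (contributing to $\psi_2 = \delta_c^1(f)$) is inserted into one of the three tensor slots while other slots carry their own $\Delta$'s from the compatibility rewriting. The safest way to organize this is to work slot-by-slot: fix which of the three tensor factors in $(T\otimes T\otimes T)\circ\rho\circ(\cdots)$ carries the ``defect'' $f$, show that for each fixed slot the contributions from $\delta_m^1$ (four terms, one per position of $f$ inside a $T$, minus the $fT$ term) telescope against the contributions from $\delta_c^1$ acting through $\psi_2$, and sum over the three slots. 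I expect that with this organization every term appears exactly twice with opposite sign, so the whole composite vanishes; no new algebraic input beyond distributivity, coassociativity and $T$--$\Delta$ compatibility is needed.
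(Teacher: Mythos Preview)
Your approach is essentially the same as the paper's: treat each summand $d^{2,i}\bigl(\mathcal{D}^1(f)\bigr)$ separately, dispatch $d^{2,3}$ as the co-Hochschild square, and for $d^{2,1}$ (and then $d^{2,2}$) substitute $\psi_1=\delta_m^1(f)$, $\psi_2=\delta_c^1(f)$, expand fully, and cancel in pairs. The paper carries this out explicitly for $d^{2,1}$ by listing the resulting 32 terms as $T_{i,j}$ ($1\le i\le 8$, $1\le j\le 4$) and exhibiting the 16 cancelling pairs.

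One small correction to your anticipated organization: in $d^{2,1}$ the $\Delta$-free terms (your ``$T$ and $f$ only'' block, the paper's $T_1,T_2$) do \emph{not} cancel among themselves as a Hochschild-type $d^2=0$; almost all of them cancel against $\Delta$-containing terms via the distributivity relation in its form \eqref{eq2} (e.g.\ $T(id\otimes f\otimes id)(T\otimes id\otimes id)$ pairs with $T(T\otimes T\otimes T)\rho(id^{\otimes 3}\otimes \Delta f\otimes \Delta)$). Relatedly, for $d^{2,1}$ the paper uses only distributivity \eqref{C1-ternary}/\eqref{eq2} and direct cancellation, not the $T$--$\Delta$ compatibility relation; compatibility is what enters for $d^{2,2}$. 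This doesn't affect the soundness of your plan, only the bookkeeping you should expect.
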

{\it Proof.}
By assumption we have  $\mathcal{D}^1(f)= (\psi_1, \psi_2)$, where $\psi_1=\delta^{1}_m(f)$ and $\psi_2=\delta^{1}_c(f)$.   First we prove that $d^{2,1}(\mathcal{D}^1(f))=0$, then exactly using the same technics we obtain that  $d^{2,2}(\mathcal{D}^1(f))=0$.  Since $d^{2,3}$ corresponds to the coHochschild  $2$-differential for the ternary comultiplication $\Delta$, it's straightforward that   $d^{2,3}(\mathcal{D}^1(f))=0$.  Now the details:\\

We write $d^{2,1}(\mathcal{D}^1(f))$ using eight terms $T_1, \cdot \cdot \cdot, T_8$ as follows
\begin{eqnarray*}
d^{2,1}(\mathcal{D}^1(f))&=&[T_1 +T_2] -[T_3+T_4+T_5+T_6+T_7+T_8]
\end{eqnarray*}
where
\begin{eqnarray*}
T_1&=& T (f \otimes id \otimes id) ({T} \otimes id \otimes id )+T (id \otimes f \otimes id)({T} \otimes id \otimes id ) +\\
&& T(id \otimes id \otimes f)({T} \otimes id \otimes id )
 - f T ({T} \otimes id \otimes id ),\\\\
 T_2&=& T(T (f \otimes id \otimes id) \otimes id \otimes id)+T(T (id \otimes f \otimes id)\otimes id \otimes id) +\\
 && T( T(id \otimes id \otimes f) \otimes id \otimes id)- T(f T \otimes id \otimes id),
\end{eqnarray*}
\begin{eqnarray*}
 T_3&=&  T (f \otimes id \otimes id)( {T} \otimes  {T} \otimes {T})\circ\rho\circ( id \otimes id \otimes id  \otimes {\Delta}  \otimes {\Delta})+\\
 &&T (id \otimes f \otimes id)( {T} \otimes  {T} \otimes {T})\circ\rho\circ( id \otimes id \otimes id  \otimes {\Delta}  \otimes {\Delta}) +\\
 &&T(id \otimes id \otimes f)( {T} \otimes  {T} \otimes {T})\circ\rho\circ( id \otimes id \otimes id  \otimes {\Delta}  \otimes {\Delta}) -\\
 &&f T( {T} \otimes  {T} \otimes {T})\circ\rho\circ( id \otimes id \otimes id  \otimes {\Delta}  \otimes {\Delta}),\\\\
 T_4&=&  T(T (f \otimes id \otimes id)\otimes  {T} \otimes {T})\circ\rho\circ( id \otimes id \otimes id  \otimes {\Delta}  \otimes {\Delta})+\\
 &&T(T (id \otimes f \otimes id)\otimes  {T} \otimes {T})\circ\rho\circ( id \otimes id \otimes id  \otimes {\Delta}  \otimes {\Delta}) +\\
 &&T( T(id \otimes id \otimes f)\otimes  {T} \otimes {T})\circ\rho\circ( id \otimes id \otimes id  \otimes {\Delta}  \otimes {\Delta}) -\\
 &&T( f T\otimes  {T} \otimes {T})\circ\rho\circ( id \otimes id \otimes id  \otimes {\Delta}  \otimes {\Delta}),
\end{eqnarray*}
\begin{eqnarray*}
 T_5&=&  {T}( {T} \otimes  T (f \otimes id \otimes id) \otimes {T})\circ\rho\circ( id \otimes id \otimes id  \otimes {\Delta}  \otimes {\Delta})+\\
 &&{T}( {T} \otimes  T (id \otimes f \otimes id) \otimes {T})\circ\rho\circ( id \otimes id \otimes id  \otimes {\Delta}  \otimes {\Delta}) + \\
 &&{T}( {T} \otimes  T(id \otimes id \otimes f) \otimes {T})\circ\rho\circ( id \otimes id \otimes id  \otimes {\Delta}  \otimes {\Delta}) - \\
 &&{T}( {T} \otimes  f T \otimes {T})\circ\rho\circ( id \otimes id \otimes id  \otimes {\Delta}  \otimes {\Delta}),
\\\\
 T_6&=&  {T}( {T} \otimes  {T} \otimes T (f \otimes id \otimes id))\circ\rho\circ( id \otimes id \otimes id  \otimes {\Delta}  \otimes {\Delta})+\\
 &&{T}( {T} \otimes  {T} \otimes T (id \otimes f \otimes id))\circ\rho\circ( id \otimes id \otimes id  \otimes {\Delta}  \otimes {\Delta}) +\\
 &&{T}( {T} \otimes  {T} \otimes T(id \otimes id \otimes f))\circ\rho\circ( id \otimes id \otimes id  \otimes {\Delta}  \otimes {\Delta}) -\\
 &&{T}( {T} \otimes  {T} \otimes f T)\circ\rho\circ( id \otimes id \otimes id  \otimes {\Delta}  \otimes {\Delta}),
\end{eqnarray*}
\begin{eqnarray*}
 T_7&=&{T}( {T} \otimes  {T} \otimes {T})\circ\rho\circ( id \otimes id \otimes id  \otimes (f \otimes id \otimes id)  \Delta  \otimes {\Delta}) + \\
 && {T}( {T} \otimes  {T} \otimes {T})\circ\rho\circ( id \otimes id \otimes id  \otimes (id \otimes f \otimes id)  \Delta  \otimes {\Delta})+  \\
 &&{T}( {T} \otimes  {T} \otimes {T})\circ\rho\circ( id \otimes id \otimes id  \otimes ( id \otimes id \otimes f)  \Delta \otimes {\Delta}) -\\
 &&{T}( {T} \otimes  {T} \otimes {T})\circ\rho\circ( id \otimes id \otimes id  \otimes \Delta f  \otimes {\Delta}),\\\\
 T_8&=&{T}( {T} \otimes  {T} \otimes {T})\circ\rho\circ( id \otimes id \otimes id  \otimes {\Delta}  \otimes(f \otimes id \otimes id)  \Delta )+ \\
 &&{T}( {T} \otimes  {T} \otimes {T})\circ\rho\circ( id \otimes id \otimes id  \otimes {\Delta}  \otimes(id \otimes f \otimes id)  \Delta )+  \\
 &&{T}( {T} \otimes  {T} \otimes {T})\circ\rho\circ( id \otimes id \otimes id  \otimes {\Delta}  \otimes( id \otimes id \otimes f)  \Delta )- \\
 &&{T}( {T} \otimes  {T} \otimes {T})\circ\rho\circ( id \otimes id \otimes id  \otimes {\Delta}  \otimes\Delta f ) 
\end{eqnarray*}

Let $T_{i,j}$ represents the $j$-th term of $T_i$ (in the order given).  Via the ternary distributive equation (\ref{C1-ternary}) and simple cancellation, the terms can be shown to cancel (in pairs) as follows: $T_{1,1}$ \& $T_{2,4}$, $T_{1,2}$ \& $T_{7,4}$, $T_{1,3}$ \& $T_{8,4}$, $T_{1,4}$ \& $T_{3,4}$, $T_{2,1}$ \& $T_{4,1}$, $T_{2,2}$ \& $T_{5,1}$, $T_{2,3}$ \& $T_{6,1}$, $T_{3,1}$ \& $T_{4,4}$, $T_{3,2}$ \& $T_{5,4}$, $T_{3,3}$ \& $T_{6,4}$, $T_{4,2}$ \& $T_{7,1}$, $T_{4,3}$ \& $T_{8,1}$, $T_{5,2}$ \& $T_{7,2}$, $T_{5,3}$ \& $T_{8,2}$, $T_{6,2}$ \& $T_{7,3}$, $T_{6,3}$ \& $T_{8,4}$, so that we then obtain at the end $\mathcal{D} ^{2}(\mathcal{D}^{1}(f))=0.$  As we mentioned before using exactly the same technics we obtain that  $d^{2,2}(\mathcal{D}^1(f))=0$.  Since $d^{2,3}$ corresponds to the coHochschild  $2$-differential for the ternary multiplication $\Delta$, it's straightforward that $d^{2,3}(\mathcal{D}^1(f))=0$.  Thus $\mathcal{D} ^{2}\circ \mathcal{D}^{1}$ is zero.
$\Box$\\
\noindent
The   $1$-cocycle spaces of $A$ are
\begin{equation*}
\mathit{Z}_m^{1}(A ,A)=\{f: A\rightarrow
A\,:\,\delta_m^{1}f=0\}, \;\; \mathit{Z}_c^{1}(A ,A)=\{f:  A\rightarrow
A\,:\, \delta_c^{1}f=0\} 
\end{equation*}
and
\begin{equation*}
 \mathit{Z}^{1}(A ,A)=  \mathit{Z}_m^{1}  \oplus \mathit{Z}_c^{1}=H^1(A,A).
\end{equation*}
The   $2$-coboundaries space of $A$ is
\begin{equation*}
\mathit{B}^{2}(A ,A ) =Im(\mathcal{D}^1).
\end{equation*}
The   $2$-cocycles space of ${A}$ is
\begin{equation*}
\mathit{Z}^{2}({A,A}) =ker(\mathcal{D}^2).
\end{equation*}
Then the second cohomology group is given by the quotient $\mathit{Z}^{2}({A,A})/\mathit{B}^{2}({A,A})$.

 \section{One-parameter formal deformations}
In this section we extend to ternary distributive sets the theory of deformation of rings and associative algebras introduced by Gerstenhaber \cite{Gerst1} and by Nijenhuis and Richardson for Lie
algebras \cite{NR}.  The fundamental results of Gerstenhaber's theory connect deformation theory with the suitable cohomology groups.  This theory was extended to ternary algebras of associative type in \cite{AM1,AM2}.

\noindent
In the following
we define the concept of deformation for a ternary distributive bialgebra and provide the connection to 
cohomology groups. The idea is to deform both the ternary multiplication and the ternary comultiplication at the same time.


\noindent
Let  $(A, T, \Delta) $ be a distributive ternary bialgebra.  A deformation of $(A, T, \Delta) $ is a
$\K [[t]]$-bialgebra $(A_t, T_t, \Delta_t)$, where $A_t=A \otimes \K [[ t ]]$
and $ A_t/(tA_t) \cong A$. Deformations of $T$ and  $\Delta$ are given by
$T_t= T + t T_1 + \cdots + t^n T_n + \cdots : A_t \otimes A_t \otimes A_t \rightarrow A_t$
and $\Delta_t = \Delta + t \Delta_1
+ \cdots + t^n \Delta_n + \cdots : A_t \rightarrow A_t  \otimes A_t \otimes A_t$
where $T_i: A \otimes A  \otimes A \rightarrow A $, $\Delta_i : A \rightarrow A  \otimes A  \otimes A$,
$i=1, 2, \cdots$, are sequences of maps.

\noindent Suppose $\bar{T}=T + t T_1 + \cdots + t^n T_n$ and
$\bar{\Delta} =\Delta + t \Delta_1+ \cdots + t^n \Delta_n$ satisfy the bialgebra
conditions (distributivity,  coassociativity and compatibility) mod $t^{n+1}$,
and suppose
that  there exist $T_{n+1}: A \otimes A  \otimes A \rightarrow A$ and
$\Delta_{n+1}: A \rightarrow A \otimes A  \otimes A$ such that
$\bar{T}+t^{n+1} T_{n+1}$ and $\bar{\Delta}+ t^{n+1} \Delta_{n+1}$
satisfy the bialgebra conditions mod $t^{n+2}$.
Define  $\phi_1 \in \Hom(A^{\otimes 5}, A)$,
$\phi_2\in \Hom(A^{\otimes 3}, A^{\otimes 3})$,
and $\phi_3 \in \Hom(A, A^{\otimes 5})$
by
\begin{eqnarray}
\bar{T} (\bar{T} \otimes id \otimes id ) - \bar{T}( \bar{T} \otimes  \bar{T} \otimes \bar{T})\circ\rho\circ( id \otimes id \otimes id  \otimes \bar{\Delta}  \otimes \bar{\Delta}) &=&  t^{n+1} \phi_1 \quad {\rm mod}\  t^{n+2} , \label{hoch2d1} \\
\bar{\Delta}\;\; \bar{T} - ( \bar{T} \otimes  \bar{T} \otimes \bar{T})\circ\rho\circ( \bar{\Delta}  \otimes \bar{\Delta}  \otimes \bar{\Delta})
&=&  t^{n+1} \phi_2 \quad {\rm mod}\  t^{n+2} , \label{hoch2d2}\\
(\bar{\Delta} \otimes  id \otimes id ) \bar{\Delta} - ( id \otimes id  \otimes \bar{\Delta})\bar{\Delta}
&=&  t^{n+1} \phi_3 \quad {\rm mod}\  t^{n+2} . \label{hoch2d3}
\end{eqnarray}

\noindent
Now expanding these three equations we obtain the values of $\phi_1, \phi_2 $ and $\phi_{3}:$

\begin{eqnarray*}
 \phi_1 &=& [{T_{n+1}} ({T} \otimes id \otimes id ) + {T} ({T_{n+1}} \otimes id \otimes id ) ]- [{T_{n+1}}( {T} \otimes  {T} \otimes {T})\circ\rho\circ( id \otimes id \otimes id  \otimes {\Delta}  \otimes {\Delta})  +\\
&&  {T}( {T_{n+1}} \otimes  {T} \otimes {T})\circ\rho\circ( id \otimes id \otimes id  \otimes {\Delta}  \otimes {\Delta}) +  {T}( {T} \otimes  {T_{n+1}} \otimes {T})\circ\rho\circ( id \otimes id \otimes id  \otimes {\Delta}  \otimes {\Delta}) \\
&&  {T}( {T} \otimes  {T} \otimes {T_{n+1}})\circ\rho\circ( id \otimes id \otimes id  \otimes {\Delta}  \otimes {\Delta}) +  {T}( {T} \otimes  {T} \otimes {T})\circ\rho\circ( id \otimes id \otimes id  \otimes {\Delta_{n+1}}  \otimes {\Delta}) +\\
&& {T}( {T} \otimes  {T} \otimes {T})\circ\rho\circ( id \otimes id \otimes id  \otimes {\Delta}  \otimes {\Delta_{n+1}}) ],\\
 \phi_2 &=& [ \Delta_{n+1} T  + \Delta T_{n+1}  ]- [( T_{n+1} \otimes  T \otimes T)\circ\rho\circ( \Delta  \otimes \Delta  \otimes \Delta) + ( T \otimes  T_{n+1} \otimes T)\circ\rho\circ( \Delta  \otimes \Delta  \otimes \Delta) +\\
 && ( T \otimes  T \otimes T_{n+1})\circ\rho\circ( \Delta  \otimes \Delta  \otimes \Delta) +( T \otimes  T \otimes T)\circ\rho\circ( \Delta_{n+1}  \otimes \Delta  \otimes \Delta) +\\
 && ( T \otimes  T \otimes T)\circ\rho\circ( \Delta  \otimes \Delta_{n+1}  \otimes \Delta) + ( T \otimes  T \otimes T)\circ\rho\circ( \Delta  \otimes \Delta  \otimes \Delta_{n+1}) ],\\
  \phi_3 &=& [ (\Delta_{n+1} \otimes  id \otimes id ) \Delta + (\Delta \otimes  id \otimes id ) \Delta_{n+1} ] - [( id \otimes id  \otimes \Delta_{n+1}) \Delta + ( id \otimes id  \otimes \Delta) \Delta_{n+1}].
\end{eqnarray*}

\begin{proposition}
Let  $(A, T, \Delta) $ be a distributive ternary weak bialgebra and  $(A_t, T_t, \Delta_t)$, where $A_t=A \otimes \K [[ t ]]$,
$T_t= T + t T_1 + \cdots + t^n T_n + \cdots : A_t \otimes A_t \otimes A_t \rightarrow A_t$
and $\Delta_t = \Delta + t \Delta_1
+ \cdots + t^n \Delta_n + \cdots : A_t \rightarrow A_t  \otimes A_t \otimes A_t$
where $T_i: A \otimes A  \otimes A \rightarrow A $, $\Delta_i : A \rightarrow A  \otimes A  \otimes A$,
$i=1, 2, \cdots$, are sequences of maps.

Then  $\mathcal{D}^2(T_{1}, \Delta_{1})=(\phi_1, \phi_2, \phi_{3})=0$.
\end{proposition}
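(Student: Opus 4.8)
The plan is to substitute the formal power series $T_t = T + tT_1 + t^2 T_2 + \cdots$ and $\Delta_t = \Delta + t\Delta_1 + t^2\Delta_2 + \cdots$ into the three identities that make $(A_t, T_t, \Delta_t)$ a distributive ternary weak bialgebra over $\K[[t]]$ — namely distributivity $T_t\circ(T_t\otimes id\otimes id) = T_t\circ(T_t\otimes T_t\otimes T_t)\circ\rho\circ(id\otimes id\otimes id\otimes\Delta_t\otimes\Delta_t)$, compatibility $\Delta_t\circ T_t = (T_t\otimes T_t\otimes T_t)\circ\rho\circ(\Delta_t\otimes\Delta_t\otimes\Delta_t)$, and weak coassociativity $(\Delta_t\otimes id\otimes id)\circ\Delta_t = (id\otimes id\otimes\Delta_t)\circ\Delta_t$ — and then to compare coefficients of powers of $t$. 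Since $(A_t, T_t, \Delta_t)$ is by hypothesis a deformation, each of these three identities holds identically in $\K[[t]]$, so every coefficient of every power of $t$ on the two sides agrees.

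First I would collect the coefficient of $t^0$ in each identity: these are exactly the distributivity, compatibility and weak coassociativity of $(A, T, \Delta)$, which hold by assumption, so the $t^0$-parts cancel. Next I would collect the coefficient of $t^1$. In the distributivity identity the linear part receives one contribution from $T_1$ replacing each occurrence of the outer $T$ and one from $T_1$ replacing the inner $T$, plus one contribution from $\Delta_1$ replacing each of the two copies of $\Delta$ on the right; term-for-term this is precisely the six-term expression defining $d^{2,1}(T_1, \Delta_1)$ in Section 4. In the same way the $t^1$-coefficient of the compatibility identity is $d^{2,2}(T_1, \Delta_1)$, and the $t^1$-coefficient of the weak coassociativity identity is $d^{2,3}(T_1, \Delta_1)$, which involves only $\Delta_1$ (as remarked after the definition of $d^{2,3}$, this is just the co-Hochschild $2$-differential of $\Delta$). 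Specializing equations (\ref{hoch2d1}), (\ref{hoch2d2}), (\ref{hoch2d3}) to $n = 0$, these three linear coefficients are by definition $\phi_1$, $\phi_2$, $\phi_3$, so $\mathcal{D}^2(T_1, \Delta_1) = d^{2,1}(T_1,\Delta_1) + d^{2,2}(T_1,\Delta_1) + d^{2,3}(T_1,\Delta_1) = (\phi_1, \phi_2, \phi_3)$.

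Finally, since each of the three bialgebra identities holds identically in $t$, its $t^1$-coefficient vanishes, so $\phi_1 = \phi_2 = \phi_3 = 0$ and hence $\mathcal{D}^2(T_1, \Delta_1) = 0$, which is the claim. The only real work is in the second step: one has to keep careful track of the positions of the tensor factors and of the action of the permutation $\rho$ on $A^{\otimes 9}$ so as to match the $t^1$-coefficients term-by-term with the expressions defining $d^{2,1}$ and $d^{2,2}$. I expect no conceptual obstacle here, since the differentials $\mathcal{D}^1, \mathcal{D}^2$ were set up precisely so that this matching occurs; the one point worth flagging is that, as $(A, T, \Delta)$ is only a \emph{weak} bialgebra, it is the weak coassociativity of $\Delta$ (not full coassociativity) that enters, consistently with the formula for $d^{2,3}$.
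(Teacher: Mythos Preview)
Your argument is correct and is exactly the approach the paper takes: the paper does not give a separate proof of this proposition, but simply records the expansion of the three bialgebra identities to first order in $t$ immediately before the statement, identifying the order-$t$ defects $\phi_1,\phi_2,\phi_3$ with $d^{2,1}(T_{n+1},\Delta_{n+1})$, $d^{2,2}(T_{n+1},\Delta_{n+1})$, $d^{2,3}(T_{n+1},\Delta_{n+1})$, and the proposition is then the specialization $n=0$ together with the observation that a genuine deformation makes each defect vanish. One cosmetic point: $d^{2,1}$ has eight summands (two from the left side of distributivity and six from the right), not six, so your phrase ``six-term expression'' undercounts, but this does not affect the argument.
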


In the sequel, we focus on  deformations of a  ternary distributive set   $(Q,T)$ and set   $A=\K[Q]$ to  be the vector space spanned by the elements of $Q$. We refer to $(\K [Q],T)$, where $T$ is extend by $\K$-trilinearity, as a ternary distributive algebra. 

\noindent
\begin{definition} {\rm
A one-parameter formal deformation of $(\K[Q],T) $ is a pair $(\K[Q]_t, T_t)$ where $\K[Q]_t$
is a $\K[[t]]$-algebra
given by $\K[Q]_t=\K[Q]  \otimes \K[[ t ]]$ with all ternary structures
inherited by extending those on $\K[Q]_t$
with the identity on the
$\K[[t]]$ factor (the trivial deformation as the algebra), with
a deformations of $T$ given by $T_t= T + t T_1 + \cdots +
t^n T_n + \cdots : \K[Q]_t \otimes \K[Q]_t \otimes \K[Q]_t \rightarrow \K[Q]_t$ where
$T_i: \K[Q] \otimes \K[Q] \otimes \K[Q] \rightarrow \K[Q] $,
 $i=1, 2, \cdots$, are linear
maps. } \end{definition}

\noindent
The map $T_t$ satisfies the equation
 \begin{eqnarray}\nonumber
T_t\circ (T_t \o id \o id)=T_t\circ (T_t \o T_t \o T_t)\circ\rho \circ (id\o id \o id \o D\o D).
\end{eqnarray}
That is for elements $x,y,z,u,v\in Q$, we have 
\begin{eqnarray}\label{equaDefo}
T_t(T_t(x \o y \o z) \o u \o v)=T_t(T_t(x \o u \o v) \o T_t(y \o u \o v) \o T_t(z \o u \o v))
\end{eqnarray}
We call the equation \eqref{equaDefo} the deformation equation of the ternary operation $T$.
 
\subsection{Deformation equation}
The deformation equation \eqref{equaDefo} may be written by  expanding and collecting the coefficients of $%
t^k$ as
\begin{eqnarray*}
 \sum_{k=0}^\infty t^{k}\sum_ { i=0}^k
T_i(T_{k-i}(x \o y \o z) \o u \o v)=  \sum_{k=0}^\infty  t^{k} \sum_ { m+n+p+q=k }     T_m(T_n(x \o u \o v) \o T_p(y \o u \o v) \o T_q(z \o u \o v)),
\end{eqnarray*}
where $m,n,p$ and $q$ are non-negative integers.
It  yields, for $k=0,1,2,\ldots$%
\begin{eqnarray*}
 \sum_ { i=0}^k
T_i(T_{k-i}(x \o y \o z) \o u \o v)=    \sum_ { m+n+p+q=k }     T_m(T_n(x \o u \o v) \o T_p(y \o u \o v) \o T_q(z \o u \o v))
.
\\
\end{eqnarray*}

\noindent
This infinite system gives the
necessary and sufficient conditions for $T _t$ to  be a distributive  ternary operation. The first problem is to give
conditions on $T_i$ so that the deformation $T _t$ is distributive.

\noindent
\\  The first equation $\left( k=0\right) $ is the ternary distributivity condition for $%
T _0.$\\The second equation $(k=1)$ shows that $T _1$ satisfies $\delta_m^2(T_1)=0$. More generally, suppose that $T _p$ be the first non-zero coefficient
after $T $ in the deformation $T _t$. This $T_p$ is called the {\it %
infinitesimal} of $T _t$ and should satisfy $\delta_m^2(T_p)=0$. In order to express it as a $2$-cocycle with respect to the cohomology defined above, we consider $(\K[Q],T) $ as a weak  bialgebra $(\K[Q],T,D) $ where $D$ is the diagonal map defined on the elements of $Q$ as $D(x)=x\o x\o x$. Therefore the pair $(T _p,0)$  is a 2-cocycle with respect to the cohomology of $(\K[Q],T,D) $. We call
ˆ $T_p$ a 2-cocycle of the ternary distributive    algebras
cohomology   with coefficient in itself.

\begin{theorem} The map $T _p$  is a 2-cocycle of the ternary distributive algebras
cohomology  with coefficient in itself.
\end{theorem}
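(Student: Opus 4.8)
The plan is to read off the claim directly from the deformation equation and match it against the cohomology apparatus of Section 4. Recall the infinite system obtained by collecting powers of $t$ in the deformation equation \eqref{equaDefo}: for each $k$,
\begin{eqnarray*}
\sum_{i=0}^k T_i(T_{k-i}(x\o y\o z)\o u\o v)=\sum_{m+n+p+q=k} T_m(T_n(x\o u\o v)\o T_p(y\o u\o v)\o T_q(z\o u\o v)).
\end{eqnarray*}
Since $T_p$ is by hypothesis the first non-zero coefficient after $T_0=T$, for each $k$ with $1\le k\le p$ every term on either side in which two indices among $\{i,k-i\}$ or $\{m,n,p,q\}$ are simultaneously positive must vanish (a product of two maps with positive subscripts has total degree $<k$ only if one of them is zero, and the only surviving positive-subscript map up to degree $p$ is $T_p$ itself, which forces the complementary indices to be $0$). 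Hence for $k=p$ the system collapses to exactly the terms that are linear in $T_p$, namely
\begin{eqnarray*}
&&T_p(T(x\o y\o z)\o u\o v)+T(T_p(x\o y\o z)\o u\o v)\\
&=&T_p(T(x\o u\o v)\o T(y\o u\o v)\o T(z\o u\o v))+T(T_p(x\o u\o v)\o T(y\o u\o v)\o T(z\o u\o v))\\
&&+\,T(T(x\o u\o v)\o T_p(y\o u\o v)\o T(z\o u\o v))+T(T(x\o u\o v)\o T(y\o u\o v)\o T_p(z\o u\o v)).
\end{eqnarray*}
First I would verify that, after rewriting $u,v$ via the diagonal $D$ and the shuffle $\rho$ exactly as in equation \eqref{eq2}, the left-hand side is the bracket $[{T_p}({T}\otimes id\otimes id)+{T}(T_p\otimes id\otimes id)]$ and the right-hand side is precisely the four $T_p$-linear terms in $d^{2,1}(T_p,\psi_2)$ that survive when $\psi_2=0$ (the two $\Delta_{n+1}$-type terms of $d^{2,1}$ drop out since the second slot is $0$, and $D$ replaces every $\Delta$). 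Thus the $k=p$ equation is literally $d^{2,1}(T_p,0)=0$.

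Next I would dispatch the remaining two components of $\mathcal{D}^2$. For $d^{2,2}(T_p,0)$: here $\psi_1=T_p$, $\psi_2=0$, so $d^{2,2}(T_p,0)=[0\cdot T+\Delta T_p]-[\text{three }T_p\text{-linear }\rho\text{-terms in the }\Delta\text{'s}]$ with $\Delta=D$; this is exactly the coefficient of $t^p$ in the compatibility equation $\bar\Delta\,\bar T=(\bar T\otimes\bar T\otimes\bar T)\circ\rho\circ(\bar\Delta\otimes\bar\Delta\otimes\bar\Delta)$ when we take the trivial (undeformed) comultiplication $\Delta_t=D$, $\Delta_i=0$. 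Since $D$ is not being deformed, $\bar\Delta=D$ for all orders, and the compatibility of $D$ with $T_p$ follows from the general fact established right after the definition of distributive ternary bialgebra: any ternary operation on a set is compatible with the diagonal comultiplication, and this compatibility is $\K$-multilinear, hence passes to $T_p$. So $d^{2,2}(T_p,0)=0$. For $d^{2,3}(T_p,0)$: this expression involves only $\psi_2$, which is $0$, so $d^{2,3}(T_p,0)=0$ trivially (equivalently, it is the co-Hochschild $2$-differential applied to the coassociative map $D$, which vanishes). Summing, $\mathcal{D}^2(T_p,0)=d^{2,1}+d^{2,2}+d^{2,3}=0$, i.e. $(T_p,0)\in Z^2(A,A)$, which is the assertion.

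The only genuine obstacle is the bookkeeping in the first step: one must be careful that the vanishing-of-mixed-terms argument really does leave exactly the $T_p$-linear part and nothing else, and that the translation between the ``elementwise'' form $T_i(T_{k-i}(x\o y\o z)\o u\o v)=\cdots$ and the operadic form with $\rho$ and $D$ is the same bijection used to pass from \eqref{C1-ternary} to \eqref{eq2}; once that identification is pinned down, matching the six terms on each side against $d^{2,1}(\cdot,0)$ is purely mechanical. I would also note explicitly that treating $(\K[Q],T)$ as the weak bialgebra $(\K[Q],T,D)$ is what makes $d^{2,2}$ and $d^{2,3}$ available and automatically satisfied, so that the content of the theorem is entirely carried by $d^{2,1}$, i.e. by the second ($k=1$, or more generally $k=p$) equation of the deformation system.
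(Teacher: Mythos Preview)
Your derivation of $d^{2,1}(T_p,0)=0$ is exactly the paper's argument: set $k=p$ and $T_1=\cdots=T_{p-1}=0$ in the deformation system, and only the terms linear in $T_p$ survive. That is the entire content of the paper's one-line proof (and is what the surrounding text labels $\delta_m^2(T_p)=0$). Your treatment of $d^{2,3}(T_p,0)$ is also fine, since that differential depends only on $\psi_2=0$.

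The gap is in your argument for $d^{2,2}(T_p,0)=0$. You assert that compatibility of $D$ with $T_p$ holds because ``any ternary operation on a set is compatible with the diagonal comultiplication, and this compatibility is $\K$-multilinear, hence passes to $T_p$.'' But $T_p$ is an arbitrary $\K$-linear map $A^{\otimes 3}\to A$, not a map induced by a set-theoretic operation on $Q$, and compatibility with $D$ is a property of a \emph{specific} map, not something inherited by linearity from the set-level statement. Concretely, for $x,y,z\in Q$ write $T(x,y,z)=w_0\in Q$ and $T_p(x,y,z)=\sum_w a_w\,w$; then
\[
D\,T_p(x,y,z)=\sum_w a_w\,(w\otimes w\otimes w),
\]
whereas the three $T_p$-linear terms on the right of the $d^{2,2}$ condition give
\[
\sum_w a_w\bigl(w\otimes w_0\otimes w_0 + w_0\otimes w\otimes w_0 + w_0\otimes w_0\otimes w\bigr),
\]
and already when $T_p(x,y,z)=w_0$ these differ by a factor of $3$. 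Equivalently, you are tacitly assuming that the pair $(T_t,D)$ satisfies the bialgebra compatibility \eqref{hoch2d2} with $\bar\Delta=D$ undeformed, but the definition of a one-parameter deformation of $(\K[Q],T)$ imposes only distributivity of $T_t$, not compatibility with any comultiplication.

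The paper's proof never touches $d^{2,2}$; it extracts what it calls the $2$-cocycle condition solely from the deformation equation \eqref{equaDefo} for $T_t$. Your expansion of that step is correct and matches the paper, but the extra paragraph on $d^{2,2}$ overreaches and should be dropped.
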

\noindent
{\it Proof.} In the equation \eqref{equaDefo}, make the following substitution $k=p$ and $T
_1=\cdots =T _{p-1}=0$.
$\Box$

\subsubsection{Equivalent and trivial deformations}

We characterize the equivalent and trivial deformations
of  ternary distributive algebras.

\begin{definition}Let $(Q,T_0)$ be a ternary distributive set and   $A=(\K [Q],T_0) $ be a corresponding ternary distributive  algebra.
Let
 $T_{t}=\sum_{i\geq 0}T_{i}t^{i}$ and
$T'_{t}=\sum_{i\geq 0}T'_{i}t^{i}$ be two deformations of $(\K Q,T_0) $,
 $(T_{0}=T'_{0})$.
We say that they are equivalent if there exists a formal isomorphism
$\Phi_{t}: A\rightarrow A[[t]]$ which is a $\K[[t]]$-linear map
that may be written in the form
\begin{equation*}
\Phi_{t}=\sum_{i\geq 0}\Phi _{i}t^{i}
=id+\Phi _{1}t+\Phi_{2}t^{2}+\ldots
\qquad\text{where}\quad
\Phi_{i}\in End_{\K }(A)
\quad\text{and}\quad
\Phi_{0}=id,
\end{equation*}
such that
\begin{equation}
\label{equ2}
\Phi_{t}\circ T_{t}=T'_{t}\circ\Phi _{t}.
\end{equation}
A deformation $T_{t}$ of $T_{0}$ is said to be trivial if and only if $T_{t}$
is equivalent to $T_{0}.$
\end{definition}

The condition (\ref{equ2}) may be written
$
\Phi _{t}(T_{t}(x\otimes y\otimes z))
=T'_{t}(\Phi _{t}(x)\otimes \Phi _{t}(y)\otimes \Phi _{t}(z)),\ \forall x,y,z\in A,
$
which is equivalent to
\begin{equation}
\sum_{i\geq 0}\Phi _{i}\left(\sum_{j\geq 0}T_{j}
(x\otimes y\otimes z)t^{j}\right)t^{i}
=\sum_{i\geq 0}T'_{i}
\left(
\sum_{j\geq 0}\Phi _{j}(x)t^{j}\otimes\sum_{k\geq 0}\Phi_{k}(y)t^{k}
{\sum_{l\geq 0}}\Phi _{l}(z)t^{l}
\right)
t^{i},
\end{equation}
or
\begin{equation*}
\sum_{i,j\geq 0}\Phi _{i}(T_{j}(x\otimes y\otimes z))t^{i+j}
=
\sum_{i,j,k,l\geq 0}T'_{i}(\Phi _{j}(x)\otimes \Phi_{k}(y)\otimes \Phi_{l}(z))t^{i+j+k+l}.
\end{equation*}
By identification of  coefficients, one obtains that the constant
coefficients are identical
\begin{equation*}
T_{0}=T'_{0}
\quad\text{because}\quad
\Phi _{0}=id
\end{equation*}
and for  coefficients of $t$ one has
\begin{align*}
\Phi _{0}(T_{1}(x\otimes y\otimes z))+\Phi _{1}(T_{0}(x\otimes y\otimes z))
&=
T'_{1}(\Phi _{0}(x)\otimes \Phi _{0}(y)\otimes \Phi_{0}(z))+
T'_{0}(\Phi _{1}(x)\otimes \Phi _{0}(y)\otimes \Phi_{0}(z))\\
&+
T'_{0}(\Phi _{0}(x)\otimes \Phi _{1}(y)\otimes \Phi_{0}(z))+
T'_{0}(\Phi _{0}(x)\otimes \Phi _{0}(y)\otimes \Phi_{1}(z)).
\end{align*}
It follows
\begin{align*}
T_{1}(x\otimes y\otimes z)+\Phi _{1}(T_{0}(x\otimes y\otimes z))
&=
T'_{1}(x\otimes y\otimes z)+T_{0}(\Phi _{1}(x)\otimes
y\otimes z)\\
&+
T_{0}(x\otimes \Phi _{1}(y)\otimes z)+T_{0}(x\otimes y\otimes \Phi _{1}(z)).
\end{align*}
Consequently,
\begin{align*}
T'_{1}(x\otimes y\otimes z)
&=
T_{1}(x\otimes y\otimes z)+\Phi_{1}(T_{0}(x\otimes y\otimes z))-T_{0}(\Phi _{1}(x)\otimes y\otimes z)\\
&-
T_{0}(x\otimes \Phi _{1}(y)\otimes z)-T_{0}(x\otimes y\otimes \Phi _{1}(z)).
\end{align*}
That is $T'_{1}=T_{1}+\delta^1_m \Phi _{1}$.
Therefore $T_1$ and $T'_1$ are in the same cohomology class.

Thus, we have

\begin{theorem}
Let $(Q,T_0)$ be a ternary distributive set, $(A=\K[Q],T_{0})$ be a corresponding  ternary distributive algebra
and  $T_{t}$ be a one parameter family of deformations of $T_{0}$.
Then $T_{t}$ is equivalent to
\begin{equation}
T_{t}(x\otimes y\otimes z)
=T_{0}(x\otimes y\otimes z)+T_{p}^{\prime }(x\otimes y\otimes z)t^{p}
+T_{p+1}^{\prime }(x\otimes y\otimes z)t^{p+1}+\ldots
\end{equation}%
where $T_{p}^{\prime }\in \mathit{Z}^{2}(A,A)$ and
$T_{p}^{\prime }\notin \mathit{B}^{2}(A,A)$.

Moreover, if $\mathit{H}^{2}(A,A)=\{0\}$ then every deformation of $A$ is trivial. The ternary distributive algebra is said rigid.
\end{theorem}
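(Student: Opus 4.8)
\noindent
The plan is to run the classical Gerstenhaber reduction, resting on two facts already established. First, the preceding theorem: the infinitesimal $T_p$ of a deformation $T_t$ of $T_0$ — the first coefficient after $T_0$ that does not vanish — is a $2$-cocycle, i.e. $(T_p,0)\in\mathit{Z}^2(A,A)$ for the weak bialgebra $(\K[Q],T_0,D)$ with $D$ the diagonal comultiplication. Second, the coefficient-by-coefficient computation carried out just above the statement: a formal equivalence $\Phi_t=id+\Phi_1 t+\cdots$ alters the lowest correction term only by a coboundary, $T'_1=T_1+\delta^1_m\Phi_1$ (up to the sign convention for $\delta^1_m$ used there).

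Next I would iterate. Assume the deformation is nontrivial, with infinitesimal $T_p$; by the preceding theorem $T_p\in\mathit{Z}^2(A,A)$. If $T_p\notin\mathit{B}^2(A,A)$, the normal form is already achieved with $T'_i=T_i$. Otherwise $T_p=\delta^1_m\Phi$ for some $\Phi\in End_\K(A)$; set $\Phi_t=id-\Phi\,t^p$ (sign chosen to cancel $T_p$) and identify the coefficients of \eqref{equ2} exactly as before, now with $\Phi_0=id$ and $\Phi_i=0$ for $0<i<p$. All cross terms in degrees below $t^p$ then vanish, so $T'_i=0$ for $i\le p$: the equivalent deformation $T'_t$ has its infinitesimal pushed to degree $\ge p+1$. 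Apply the preceding theorem again to $T'_t$; its new infinitesimal is once more a $2$-cocycle, and we repeat. If at some stage the infinitesimal fails to be a coboundary we have the asserted $T'_p\in\mathit{Z}^2(A,A)\setminus\mathit{B}^2(A,A)$. If not, the successive conjugating automorphisms $\Phi_t^{(k)}=id-\Phi^{(k)}t^{p_k}$ have strictly increasing orders $p_1<p_2<\cdots$, so modulo any fixed $t^N$ only finitely many are nontrivial; their composite $\Psi_t=\cdots\circ\Phi_t^{(2)}\circ\Phi_t^{(1)}$ converges $t$-adically to a formal automorphism with $\Psi_0=id$ carrying $T_t$ to a deformation all of whose correction terms vanish, i.e. to $T_0$, so the deformation is trivial. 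In particular, if $\mathit{H}^2(A,A)=\{0\}$ then $\mathit{Z}^2(A,A)=\mathit{B}^2(A,A)$, the infinitesimal at every stage is a coboundary, the process never terminates, and the displayed argument shows every deformation of $A$ is trivial; $A$ is rigid.

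I expect the only delicate point to be the bookkeeping at the general stage: one must check that conjugating by $\Phi_t^{(k)}=id-\Phi^{(k)}t^{p_k}$ clears the coefficients in all degrees $\le p_k$ without disturbing the already-vanishing lower ones. This is where the conditions $\Phi_0^{(k)}=id$ and the vanishing of the intermediate coefficients enter the identification of \eqref{equ2}: as in the $p=1$ case done above, its degree-$t^{p_k}$ component collapses to $T'_{p_k}=T_{p_k}\pm\delta^1_m\Phi^{(k)}$ plus terms that are identically zero. Once that is in place, the $t$-adic convergence of the infinite composite $\Psi_t$, and hence the whole argument, is routine.
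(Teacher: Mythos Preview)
Your proposal is correct and follows essentially the same classical Gerstenhaber reduction as the paper's own proof: the infinitesimal $T_p$ is a $2$-cocycle, and if it is a coboundary one conjugates by $\Phi_t=id\pm g\,t^p$ to kill it and push the first nonzero term to higher order, then iterates. Your write-up is in fact more careful than the paper's, which writes $\Phi_t=id+tg$ (rather than $id+t^p g$) and leaves the $t$-adic convergence of the iterated composite implicit.
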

\begin{proof}
Let $T_p$ be the first nonzero term in the deformation.
The deformation equation implies $\delta T_{p}=0$ which means
$T_{p}\in \mathit{Z}^{2}(A,A)$. If further $T_{p}\in
\mathit{B}^{2}(A,A)$, i.e. $T_{p}=\delta g$ with
$g\in Hom(A,A)$, then using a formal morphism $\Phi _{t}=id+t g$ we
obtain that the deformation $T_t$ is equivalent to the deformation
given for all $x,y,z\in A$ by
\begin{equation*}
T_{t}^{\prime }(x\otimes y\otimes z) =\Phi _{t}^{-1}\circ T_{t}\circ
(\Phi_{t}(x)\otimes \Phi _{t}(y)\otimes \Phi _{t}(z))
=T_{0}(x\otimes y\otimes z)+T^{\prime }_{p+1}(x\otimes y\otimes
z)t^{p+1}+\ldots
\end{equation*}
and again $\ T_{p+1}\in \mathit{Z}^{2}(T_{0},T_{0})$.
\end{proof}
We end the paper with the fact that we do not know yet how to use these ternary quandles to obtain invariant of knots and/or knotted surfaces. 

\subsection*{Acknowledgements} We wish to thank Edwin Clark for a reading the manuscript and also for a number of crucial discussions.

\appendix

\section{Ternary Distributivity from Coalgebras}\label{TDColag}

\noindent
In the following proposition we give a classification of ternary distributive linear maps $q:A \otimes A \otimes A \rightarrow A$ that are compatible with comultiplication meaning that $\Delta q=(q \otimes q \otimes q)\rho(\Delta \otimes \Delta \otimes \Delta)$.  Here $A=\K[Q]$ is a vector space of dimension two with a basis $Q=\{x,y\}$.  Here is an outline of the proof of the following proposition.  If we set  $q(x \otimes x \otimes x)=ax+by$, then we obtain that $a\Delta(x)+b\Delta(y)=(ax+by)\otimes  (ax+by)\otimes (ax+by) $ which implies that $a^3=a,\; b^3=b, a^2b=0$ and $ab^2=0$.   Thus the only possible values of $q(x \otimes x \otimes x)$ are $0$, $\pm x$ or $\pm y$.  The same holds for the other generators $x \otimes x \otimes y$ etc.\\
With these notations we have the following

  \begin{proposition}
A linear map $q:A \otimes A \otimes A \rightarrow A$ satisfies equation (\ref{C1-ternary}) (that is ternary distributive)
and compatible with the comultiplication if and only if $q$ is
one of the  functions indicated via any column in the table below. The values are determined on the basis elements
$ x \otimes x \otimes x,$ through $y\otimes y \otimes y$ as indicated in the following chart


\scriptsize
\begin{flushleft}
\begin{tabular}{| l | c | c | c | c | c | c | c | c | c | c | c | c | c | c |  }
\hline
$q(x,x,x)$ & $    0$ & $    0$ & $    0$ & $    0$ & $    0$ & $    0$ & $    0$ & $    0$ & $    0$ & $    0$ & $    0$ & $    0$ & $    0$ & $    0$ \\
$q(x,x,y)$ & $    0$ & $    0$ & $    0$ & $    0$ & $    0$ & $    0$ & $    0$ & $    0$ & $    0$ & $    0$ & $    0$ & $    0$ & $    0$ & $    0$ \\
$q(x,y,x)$ & $    0$ & $    0$ & $    0$ & $    0$ & $    0$ & $    0$ & $    0$ & $    0$ & $    0$ & $    0$ & $    0$ & $    0$ & $    0$ & $    0$ \\
$q(x,y,y)$ & $    0$ & $    0$ & $    0$ & $    0$ & $    0$ & $    0$ & $    0$ & $    0$ & $    0$ & $    0$ & $    0$ & $    0$ & $    0$ & $    0$ \\
$q(y,x,x)$ & $    0$ & $    0$ & $    0$ & $    0$ & $    0$ & $    0$ & $    0$ & $    0$ & $    0$ & $    0$ & $    0$ & $    0$ & $    0$ & $    0$ \\
$q(y,x,y)$ & $    0$ & $    0$ & $    0$ & $    0$ & $    0$ & $    0$ & $\pm x$ & $\pm x$ & $\pm x$ & $\pm x$ & $\pm x$ & $\pm x$ & $\pm x$ & $\pm x$ \\
$q(y,y,x)$ & $    0$ & $    0$ & $    0$ & $\pm x$ & $\pm x$ & $\pm x$ & $\mp x$ & $\mp x$ & $\mp x$ & $    0$ & $    0$ & $    0$ & $\pm x$ & $\pm x$ \\
$q(y,y,y)$ & $    0$ & $\pm x$ & $\pm y$ & $\mp x$ & $    0$ & $\pm x$ & $\mp x$ & $    0$ & $\pm x$ & $\mp x$ & $    0$ & $\pm x$ & $\mp x$ & $    0$ \\
\hline
\end{tabular}
\begin{tabular}{| l | c | c | c | c | c | c | c | c | c | c | c | c | c | c |  }
\hline
$q(x,x,x)$ & $    0$ & $    0$ & $    0$ & $    0$ & $    0$ & $    0$ & $    0$ & $    0$ & $    0$ & $    0$ & $    0$ & $    0$ & $    0$ & $    0$ \\
$q(x,x,y)$ & $    0$ & $    0$ & $    0$ & $    0$ & $    0$ & $    0$ & $    0$ & $    0$ & $    0$ & $    0$ & $    0$ & $    0$ & $    0$ & $    0$ \\
$q(x,y,x)$ & $    0$ & $    0$ & $    0$ & $    0$ & $    0$ & $    0$ & $    0$ & $    0$ & $    0$ & $    0$ & $    0$ & $    0$ & $    0$ & $    0$ \\
$q(x,y,y)$ & $    0$ & $    0$ & $    0$ & $    0$ & $    0$ & $    0$ & $    0$ & $    0$ & $    0$ & $    0$ & $    0$ & $    0$ & $    0$ & $    0$ \\
$q(y,x,x)$ & $    0$ & $\pm x$ & $\pm x$ & $\pm x$ & $\pm x$ & $\pm x$ & $\pm x$ & $\pm x$ & $\pm x$ & $\pm x$ & $\pm x$ & $\pm x$ & $\pm x$ & $\pm x$ \\
$q(y,x,y)$ & $\pm x$ & $\mp x$ & $\mp x$ & $\mp x$ & $\mp x$ & $\mp x$ & $\mp x$ & $\mp x$ & $\mp x$ & $\mp x$ & $    0$ & $    0$ & $    0$ & $    0$ \\
$q(y,y,x)$ & $\pm x$ & $\mp x$ & $\mp x$ & $\mp x$ & $    0$ & $    0$ & $    0$ & $\pm x$ & $\pm x$ & $\pm x$ & $\mp x$ & $\mp x$ & $\mp x$ & $    0$ \\
$q(y,y,y)$ & $\pm x$ & $\mp x$ & $    0$ & $\pm x$ & $\mp x$ & $    0$ & $\pm x$ & $\mp x$ & $    0$ & $\pm x$ & $\mp x$ & $    0$ & $\pm x$ & $\mp x$ \\
\hline
\end{tabular}
\begin{tabular}{| l | c | c | c | c | c | c | c | c | c | c | c | c | c | c |  }
\hline
$q(x,x,x)$ & $    0$ & $    0$ & $    0$ & $    0$ & $    0$ & $    0$ & $    0$ & $    0$ & $    0$ & $    0$ & $    0$ & $    0$ & $    0$ & $    0$ \\
$q(x,x,y)$ & $    0$ & $    0$ & $    0$ & $    0$ & $    0$ & $    0$ & $    0$ & $    0$ & $    0$ & $    0$ & $    0$ & $    0$ & $    0$ & $    0$ \\
$q(x,y,x)$ & $    0$ & $    0$ & $    0$ & $    0$ & $    0$ & $    0$ & $    0$ & $    0$ & $    0$ & $    0$ & $    0$ & $    0$ & $    0$ & $    0$ \\
$q(x,y,y)$ & $    0$ & $    0$ & $    0$ & $    0$ & $    0$ & $    0$ & $    0$ & $    0$ & $    0$ & $    0$ & $    0$ & $    0$ & $    0$ & $    0$ \\
$q(y,x,x)$ & $\pm x$ & $\pm x$ & $\pm x$ & $\pm x$ & $\pm x$ & $\pm x$ & $\pm x$ & $\pm x$ & $\pm x$ & $\pm x$ & $\pm x$ & $\pm x$ & $\pm x$ & $\pm x$ \\
$q(y,x,y)$ & $    0$ & $    0$ & $    0$ & $    0$ & $    0$ & $\pm x$ & $\pm x$ & $\pm x$ & $\pm x$ & $\pm x$ & $\pm x$ & $\pm x$ & $\pm x$ & $\pm x$ \\
$q(y,y,x)$ & $    0$ & $    0$ & $\pm x$ & $\pm x$ & $\pm x$ & $\mp x$ & $\mp x$ & $\mp x$ & $    0$ & $    0$ & $    0$ & $\pm x$ & $\pm x$ & $\pm x$ \\
$q(y,y,y)$ & $    0$ & $\pm x$ & $\mp x$ & $    0$ & $\pm x$ & $\mp x$ & $    0$ & $\pm x$ & $\mp x$ & $    0$ & $\pm x$ & $\mp x$ & $    0$ & $\pm x$ \\
\hline
\end{tabular}
\begin{tabular}{| l | c | c | c | c | c | c | c | c | c | c | c | c | c | c |  }
\hline
$q(x,x,x)$ & $    0$ & $    0$ & $    0$ & $    0$ & $    0$ & $    0$ & $    0$ & $    0$ & $    0$ & $    0$ & $    0$ & $    0$ & $    0$ & $    0$ \\
$q(x,x,y)$ & $    0$ & $    0$ & $    0$ & $    0$ & $    0$ & $    0$ & $    0$ & $    0$ & $    0$ & $\pm x$ & $\pm x$ & $\pm x$ & $\pm x$ & $\pm x$ \\
$q(x,y,x)$ & $    0$ & $    0$ & $    0$ & $\pm x$ & $\pm x$ & $\pm x$ & $\pm y$ & $\pm y$ & $\pm y$ & $\mp x$ & $\mp x$ & $\mp x$ & $    0$ & $    0$ \\
$q(x,y,y)$ & $\pm x$ & $\pm x$ & $\pm y$ & $\mp x$ & $    0$ & $\pm x$ & $\mp y$ & $    0$ & $\pm y$ & $\mp x$ & $    0$ & $\pm x$ & $\mp x$ & $    0$ \\
$q(y,x,x)$ & $    0$ & $    0$ & $    0$ & $    0$ & $    0$ & $    0$ & $    0$ & $    0$ & $    0$ & $    0$ & $    0$ & $    0$ & $    0$ & $    0$ \\
$q(y,x,y)$ & $    0$ & $    0$ & $    0$ & $    0$ & $    0$ & $    0$ & $    0$ & $    0$ & $    0$ & $\pm y$ & $\pm y$ & $\pm y$ & $\pm y$ & $\pm y$ \\
$q(y,y,x)$ & $    0$ & $    0$ & $    0$ & $\pm y$ & $\pm y$ & $\pm y$ & $    0$ & $    0$ & $    0$ & $\mp y$ & $\mp y$ & $\mp y$ & $    0$ & $    0$ \\
$q(y,y,y)$ & $\mp y$ & $\pm y$ & $    0$ & $\mp y$ & $    0$ & $\pm y$ & $    0$ & $    0$ & $    0$ & $\mp y$ & $    0$ & $\pm y$ & $\mp y$ & $    0$ \\
\hline
\end{tabular}
\begin{tabular}{| l | c | c | c | c | c | c | c | c | c | c | c | c | c | c |  }
\hline
$q(x,x,x)$ & $    0$ & $    0$ & $    0$ & $    0$ & $    0$ & $    0$ & $    0$ & $    0$ & $    0$ & $    0$ & $    0$ & $    0$ & $    0$ & $    0$ \\
$q(x,x,y)$ & $\pm x$ & $\pm x$ & $\pm x$ & $\pm x$ & $\pm y$ & $\pm y$ & $\pm y$ & $\pm y$ & $\pm y$ & $\pm y$ & $\pm y$ & $\pm y$ & $\pm y$ & $\pm y$ \\
$q(x,y,x)$ & $    0$ & $\pm x$ & $\pm x$ & $\pm x$ & $\mp y$ & $\mp y$ & $\mp y$ & $\mp y$ & $    0$ & $    0$ & $    0$ & $\pm y$ & $\pm y$ & $\pm y$ \\
$q(x,y,y)$ & $\pm x$ & $\mp x$ & $    0$ & $\pm x$ & $\mp y$ & $    0$ & $    0$ & $\pm y$ & $\mp y$ & $    0$ & $\pm y$ & $\mp y$ & $    0$ & $    0$ \\
$q(y,x,x)$ & $    0$ & $    0$ & $    0$ & $    0$ & $    0$ & $    0$ & $    0$ & $    0$ & $    0$ & $    0$ & $    0$ & $    0$ & $    0$ & $    0$ \\
$q(y,x,y)$ & $\pm y$ & $\pm y$ & $\pm y$ & $\pm y$ & $    0$ & $\mp x$ & $    0$ & $    0$ & $    0$ & $    0$ & $    0$ & $    0$ & $    0$ & $\pm x$ \\
$q(y,y,x)$ & $    0$ & $\pm y$ & $\pm y$ & $\pm y$ & $    0$ & $\pm x$ & $    0$ & $    0$ & $    0$ & $    0$ & $    0$ & $    0$ & $    0$ & $\pm x$ \\
$q(y,y,y)$ & $\pm y$ & $\mp y$ & $    0$ & $\pm y$ & $    0$ & $    0$ & $    0$ & $    0$ & $    0$ & $    0$ & $    0$ & $    0$ & $    0$ & $    0$ \\
\hline
\end{tabular}
\begin{tabular}{| l | c | c | c | c | c | c | c | c | c | c | c | c | c | c |  }
\hline
$q(x,x,x)$ & $    0$ & $\pm x$ & $\pm x$ & $\pm x$ & $\pm x$ & $\pm x$ & $\pm x$ & $\pm x$ & $\pm x$ & $\pm x$ & $\pm x$ & $\pm x$ & $\pm x$ & $\pm x$ \\
$q(x,x,y)$ & $\pm y$ & $\mp y$ & $\mp y$ & $\mp y$ & $\mp y$ & $\mp x$ & $\mp x$ & $\mp x$ & $\mp x$ & $\mp x$ & $\mp x$ & $\mp x$ & $\mp x$ & $\mp x$ \\
$q(x,y,x)$ & $\pm y$ & $\mp y$ & $\mp y$ & $\pm y$ & $\pm y$ & $\mp x$ & $\mp x$ & $\mp x$ & $\mp x$ & $\mp x$ & $\mp x$ & $\mp x$ & $\mp x$ & $\mp x$ \\
$q(x,y,y)$ & $\pm y$ & $\mp x$ & $\pm x$ & $\mp x$ & $\pm x$ & $\mp y$ & $\mp y$ & $\mp y$ & $\mp y$ & $\mp y$ & $\mp y$ & $\mp y$ & $\mp y$ & $\mp x$ \\
$q(y,x,x)$ & $    0$ & $\mp y$ & $\pm y$ & $\mp y$ & $\pm y$ & $\mp x$ & $\mp x$ & $\mp x$ & $\mp x$ & $\pm y$ & $\pm y$ & $\pm y$ & $\pm y$ & $\pm y$ \\
$q(y,x,y)$ & $    0$ & $\mp x$ & $\mp x$ & $\pm x$ & $\pm x$ & $\mp y$ & $\mp y$ & $\pm x$ & $\pm x$ & $\mp y$ & $\mp y$ & $\pm x$ & $\pm x$ & $\mp y$ \\
$q(y,y,x)$ & $    0$ & $\mp x$ & $\mp x$ & $\mp x$ & $\mp x$ & $\mp y$ & $\pm x$ & $\mp y$ & $\pm x$ & $\mp y$ & $\pm x$ & $\mp y$ & $\pm x$ & $\mp y$ \\
$q(y,y,y)$ & $    0$ & $\pm y$ & $\pm y$ & $\pm y$ & $\pm y$ & $\pm y$ & $\pm y$ & $\pm y$ & $\pm y$ & $\pm y$ & $\pm y$ & $\pm y$ & $\pm y$ & $\mp y$ \\
\hline
\end{tabular}
\begin{tabular}{| l | c | c | c | c | c | c | c | c | c | c | c | c | c | c |  }
\hline
$q(x,x,x)$ & $\pm x$ & $\pm x$ & $\pm x$ & $\pm x$ & $\pm x$ & $\pm x$ & $\pm x$ & $\pm x$ & $\pm x$ & $\pm x$ & $\pm x$ & $\pm x$ & $\pm x$ & $\pm x$ \\
$q(x,x,y)$ & $\mp x$ & $\mp x$ & $\mp x$ & $\mp x$ & $\mp x$ & $\mp x$ & $\mp x$ & $\mp x$ & $\mp x$ & $\mp x$ & $\mp x$ & $\mp x$ & $\mp x$ & $\mp x$ \\
$q(x,y,x)$ & $\mp x$ & $\mp x$ & $\mp x$ & $\mp x$ & $\mp x$ & $\mp x$ & $\mp x$ & $\mp x$ & $\mp x$ & $\mp x$ & $\mp x$ & $\mp x$ & $\mp x$ & $\mp x$ \\
$q(x,y,y)$ & $    0$ & $\pm x$ & $\pm x$ & $\pm x$ & $\pm x$ & $\pm x$ & $\pm x$ & $\pm x$ & $\pm x$ & $\pm x$ & $\pm x$ & $\pm x$ & $\pm x$ & $\pm x$ \\
$q(y,x,x)$ & $\pm y$ & $\mp x$ & $\mp x$ & $\mp x$ & $\mp x$ & $\mp x$ & $\mp x$ & $\mp x$ & $\mp x$ & $\pm y$ & $\pm y$ & $\pm y$ & $\pm y$ & $\pm y$ \\
$q(y,x,y)$ & $\mp y$ & $\mp y$ & $\mp y$ & $\mp y$ & $\mp y$ & $\pm x$ & $\pm x$ & $\pm x$ & $\pm x$ & $\mp y$ & $\mp y$ & $\mp y$ & $\mp y$ & $\pm x$ \\
$q(y,y,x)$ & $\mp y$ & $\mp y$ & $\mp y$ & $\pm x$ & $\pm x$ & $\mp y$ & $\mp y$ & $\pm x$ & $\pm x$ & $\mp y$ & $\mp y$ & $\pm x$ & $\pm x$ & $\mp y$ \\
$q(y,y,y)$ & $    0$ & $\mp x$ & $\pm y$ & $\mp x$ & $\pm y$ & $\mp x$ & $\pm y$ & $\mp x$ & $\pm y$ & $\mp x$ & $\pm y$ & $\mp x$ & $\pm y$ & $\mp x$ \\
\hline
\end{tabular}
\begin{tabular}{| l | c | c | c | c | c | c | c | c | c | c | c | c | c | c |  }
\hline
$q(x,x,x)$ & $\pm x$ & $\pm x$ & $\pm x$ & $\pm x$ & $\pm x$ & $\pm x$ & $\pm x$ & $\pm x$ & $\pm x$ & $\pm x$ & $\pm x$ & $\pm x$ & $\pm x$ & $\pm x$ \\
$q(x,x,y)$ & $\mp x$ & $\mp x$ & $\mp x$ & $\mp x$ & $\mp x$ & $\mp x$ & $\mp x$ & $\mp x$ & $\mp x$ & $\mp x$ & $\mp x$ & $\mp x$ & $\mp x$ & $\mp x$ \\
$q(x,y,x)$ & $\mp x$ & $\mp x$ & $\mp x$ & $    0$ & $    0$ & $    0$ & $\pm x$ & $\pm x$ & $\pm x$ & $\pm y$ & $\pm y$ & $\pm y$ & $\pm y$ & $\pm y$ \\
$q(x,y,y)$ & $\pm x$ & $\pm x$ & $\pm x$ & $\mp x$ & $    0$ & $\pm x$ & $\mp x$ & $    0$ & $\pm x$ & $\mp y$ & $\mp y$ & $\mp y$ & $\mp y$ & $\pm x$ \\
$q(y,x,x)$ & $\pm y$ & $\pm y$ & $\pm y$ & $\pm y$ & $\pm y$ & $\pm y$ & $\pm y$ & $\pm y$ & $\pm y$ & $\mp x$ & $\mp x$ & $\pm y$ & $\pm y$ & $\mp x$ \\
$q(y,x,y)$ & $\pm x$ & $\pm x$ & $\pm x$ & $\mp y$ & $\mp y$ & $\mp y$ & $\mp y$ & $\mp y$ & $\mp y$ & $\mp y$ & $\pm x$ & $\mp y$ & $\pm x$ & $\mp y$ \\
$q(y,y,x)$ & $\mp y$ & $\pm x$ & $\pm x$ & $    0$ & $    0$ & $    0$ & $\pm y$ & $\pm y$ & $\pm y$ & $\mp y$ & $\mp y$ & $\mp y$ & $\mp y$ & $\mp y$ \\
$q(y,y,y)$ & $\pm y$ & $\mp x$ & $\pm y$ & $\mp y$ & $    0$ & $\pm y$ & $\mp y$ & $    0$ & $\pm y$ & $\pm y$ & $\pm y$ & $\pm y$ & $\pm y$ & $\pm y$ \\
\hline
\end{tabular}
\begin{tabular}{| l | c | c | c | c | c | c | c | c | c | c | c | c | c | c |  }
\hline
$q(x,x,x)$ & $\pm x$ & $\pm x$ & $\pm x$ & $\pm x$ & $\pm x$ & $\pm x$ & $\pm x$ & $\pm x$ & $\pm x$ & $\pm x$ & $\pm x$ & $\pm x$ & $\pm x$ & $\pm x$ \\
$q(x,x,y)$ & $\mp x$ & $\mp x$ & $\mp x$ & $    0$ & $    0$ & $    0$ & $    0$ & $    0$ & $    0$ & $    0$ & $    0$ & $    0$ & $    0$ & $    0$ \\
$q(x,y,x)$ & $\pm y$ & $\pm y$ & $\pm y$ & $\mp x$ & $\mp x$ & $\mp x$ & $    0$ & $    0$ & $    0$ & $    0$ & $    0$ & $    0$ & $    0$ & $    0$ \\
$q(x,y,y)$ & $\pm x$ & $\pm x$ & $\pm x$ & $\mp x$ & $    0$ & $\pm x$ & $\mp x$ & $\mp x$ & $\mp x$ & $\mp x$ & $    0$ & $    0$ & $    0$ & $    0$ \\
$q(y,x,x)$ & $\mp x$ & $\pm y$ & $\pm y$ & $\pm y$ & $\pm y$ & $\pm y$ & $\mp y$ & $\mp y$ & $\pm y$ & $\pm y$ & $\mp y$ & $    0$ & $    0$ & $    0$ \\
$q(y,x,y)$ & $\pm x$ & $\mp y$ & $\pm x$ & $    0$ & $    0$ & $    0$ & $    0$ & $    0$ & $    0$ & $    0$ & $    0$ & $    0$ & $    0$ & $    0$ \\
$q(y,y,x)$ & $\mp y$ & $\mp y$ & $\mp y$ & $\mp y$ & $\mp y$ & $\mp y$ & $    0$ & $    0$ & $    0$ & $    0$ & $    0$ & $    0$ & $    0$ & $    0$ \\
$q(y,y,y)$ & $\pm y$ & $\pm y$ & $\pm y$ & $\mp y$ & $    0$ & $\pm y$ & $\mp y$ & $\pm y$ & $\mp y$ & $\pm y$ & $    0$ & $\mp y$ & $    0$ & $\pm y$ \\
\hline
\end{tabular}
\begin{tabular}{| l | c | c | c | c | c | c | c | c | c | c | c | c | c | c |  }
\hline
$q(x,x,x)$ & $\pm x$ & $\pm x$ & $\pm x$ & $\pm x$ & $\pm x$ & $\pm x$ & $\pm x$ & $\pm x$ & $\pm x$ & $\pm x$ & $\pm x$ & $\pm x$ & $\pm x$ & $\pm x$ \\
$q(x,x,y)$ & $    0$ & $    0$ & $    0$ & $    0$ & $    0$ & $    0$ & $    0$ & $    0$ & $\pm x$ & $\pm x$ & $\pm x$ & $\pm x$ & $\pm x$ & $\pm x$ \\
$q(x,y,x)$ & $    0$ & $    0$ & $    0$ & $    0$ & $    0$ & $\pm x$ & $\pm x$ & $\pm x$ & $\mp x$ & $\mp x$ & $\mp x$ & $    0$ & $    0$ & $    0$ \\
$q(x,y,y)$ & $    0$ & $\pm x$ & $\pm x$ & $\pm x$ & $\pm x$ & $\mp x$ & $    0$ & $\pm x$ & $\mp x$ & $    0$ & $\pm x$ & $\mp x$ & $    0$ & $\pm x$ \\
$q(y,x,x)$ & $\pm y$ & $\mp y$ & $\mp y$ & $\pm y$ & $\pm y$ & $\pm y$ & $\pm y$ & $\pm y$ & $\pm y$ & $\pm y$ & $\pm y$ & $\pm y$ & $\pm y$ & $\pm y$ \\
$q(y,x,y)$ & $    0$ & $    0$ & $    0$ & $    0$ & $    0$ & $    0$ & $    0$ & $    0$ & $\pm y$ & $\pm y$ & $\pm y$ & $\pm y$ & $\pm y$ & $\pm y$ \\
$q(y,y,x)$ & $    0$ & $    0$ & $    0$ & $    0$ & $    0$ & $\pm y$ & $\pm y$ & $\pm y$ & $\mp y$ & $\mp y$ & $\mp y$ & $    0$ & $    0$ & $    0$ \\
$q(y,y,y)$ & $    0$ & $\mp y$ & $\pm y$ & $\mp y$ & $\pm y$ & $\mp y$ & $    0$ & $\pm y$ & $\mp y$ & $    0$ & $\pm y$ & $\mp y$ & $    0$ & $\pm y$ \\
\hline
\end{tabular}
\begin{tabular}{| l | c | c | c | c | c | c | c | c | c | c | c | c | c | c |  }
\hline
$q(x,x,x)$ & $\pm x$ & $\pm x$ & $\pm x$ & $\pm x$ & $\pm x$ & $\pm x$ & $\pm x$ & $\pm x$ & $\pm x$ & $\pm x$ & $\pm x$ & $\pm x$ & $\pm x$ & $\pm x$ \\
$q(x,x,y)$ & $\pm x$ & $\pm x$ & $\pm x$ & $\pm x$ & $\pm x$ & $\pm x$ & $\pm x$ & $\pm x$ & $\pm x$ & $\pm x$ & $\pm x$ & $\pm x$ & $\pm x$ & $\pm x$ \\
$q(x,y,x)$ & $\pm x$ & $\pm x$ & $\pm x$ & $\pm x$ & $\pm x$ & $\pm x$ & $\pm x$ & $\pm x$ & $\pm x$ & $\pm x$ & $\pm x$ & $\pm x$ & $\pm x$ & $\pm x$ \\
$q(x,y,y)$ & $\mp x$ & $    0$ & $\pm x$ & $\pm x$ & $\pm x$ & $\pm x$ & $\pm x$ & $\pm x$ & $\pm x$ & $\pm x$ & $\pm x$ & $\pm x$ & $\pm x$ & $\pm x$ \\
$q(y,x,x)$ & $\pm y$ & $\pm y$ & $\pm x$ & $\pm x$ & $\pm x$ & $\pm x$ & $\pm x$ & $\pm x$ & $\pm x$ & $\pm x$ & $\pm y$ & $\pm y$ & $\pm y$ & $\pm y$ \\
$q(y,x,y)$ & $\pm y$ & $\pm y$ & $\pm x$ & $\pm x$ & $\pm x$ & $\pm x$ & $\pm y$ & $\pm y$ & $\pm y$ & $\pm y$ & $\pm x$ & $\pm x$ & $\pm x$ & $\pm x$ \\
$q(y,y,x)$ & $\pm y$ & $\pm y$ & $\pm x$ & $\pm x$ & $\pm y$ & $\pm y$ & $\pm x$ & $\pm x$ & $\pm y$ & $\pm y$ & $\pm x$ & $\pm x$ & $\pm y$ & $\pm y$ \\
$q(y,y,y)$ & $\mp y$ & $    0$ & $\pm x$ & $\pm y$ & $\pm x$ & $\pm y$ & $\pm x$ & $\pm y$ & $\pm x$ & $\pm y$ & $\pm x$ & $\pm y$ & $\pm x$ & $\pm y$ \\
\hline
\end{tabular}
\begin{tabular}{| l | c | c | c | c | c | c | c | c | c | c | c | c | c | c |  }
\hline
$q(x,x,x)$ & $\pm x$ & $\pm x$ & $\pm x$ & $\pm x$ & $\pm x$ & $\pm x$ & $\pm x$ & $\pm x$ & $\pm x$ & $\pm x$ & $\pm x$ & $\pm x$ & $\pm x$ & $\pm x$ \\
$q(x,x,y)$ & $\pm x$ & $\pm x$ & $\pm x$ & $\pm x$ & $\pm x$ & $\pm x$ & $\pm x$ & $\pm x$ & $\pm x$ & $\pm x$ & $\pm x$ & $\pm x$ & $\pm x$ & $\pm x$ \\
$q(x,y,x)$ & $\pm x$ & $\pm x$ & $\pm x$ & $\pm x$ & $\pm x$ & $\pm x$ & $\pm x$ & $\pm x$ & $\pm x$ & $\pm x$ & $\pm x$ & $\pm x$ & $\pm y$ & $\pm y$ \\
$q(x,y,y)$ & $\pm x$ & $\pm x$ & $\pm x$ & $\pm x$ & $\pm y$ & $\pm y$ & $\pm y$ & $\pm y$ & $\pm y$ & $\pm y$ & $\pm y$ & $\pm y$ & $\pm x$ & $\pm x$ \\
$q(y,x,x)$ & $\pm y$ & $\pm y$ & $\pm y$ & $\pm y$ & $\pm x$ & $\pm x$ & $\pm x$ & $\pm x$ & $\pm y$ & $\pm y$ & $\pm y$ & $\pm y$ & $\pm x$ & $\pm x$ \\
$q(y,x,y)$ & $\pm y$ & $\pm y$ & $\pm y$ & $\pm y$ & $\pm x$ & $\pm x$ & $\pm y$ & $\pm y$ & $\pm x$ & $\pm x$ & $\pm y$ & $\pm y$ & $\pm x$ & $\pm y$ \\
$q(y,y,x)$ & $\pm x$ & $\pm x$ & $\pm y$ & $\pm y$ & $\pm x$ & $\pm y$ & $\pm x$ & $\pm y$ & $\pm x$ & $\pm y$ & $\pm x$ & $\pm y$ & $\pm y$ & $\pm y$ \\
$q(y,y,y)$ & $\pm x$ & $\pm y$ & $\pm x$ & $\pm y$ & $\pm y$ & $\pm y$ & $\pm y$ & $\pm y$ & $\pm y$ & $\pm y$ & $\pm y$ & $\pm y$ & $\pm y$ & $\pm y$ \\
\hline
\end{tabular}
\begin{tabular}{| l | c | c | c | c | c | c | c | c | c | c | c | c | c | c |  }
\hline
$q(x,x,x)$ & $\pm x$ & $\pm x$ & $\pm x$ & $\pm x$ & $\pm x$ & $\pm x$ & $\pm x$ & $\pm x$ & $\pm x$ & $\pm x$ & $\pm x$ & $\pm x$ & $\pm x$ & $\pm x$ \\
$q(x,x,y)$ & $\pm x$ & $\pm x$ & $\pm x$ & $\pm x$ & $\pm x$ & $\pm x$ & $\pm y$ & $\pm y$ & $\pm y$ & $\pm y$ & $\pm y$ & $\pm y$ & $\pm y$ & $\pm y$ \\
$q(x,y,x)$ & $\pm y$ & $\pm y$ & $\pm y$ & $\pm y$ & $\pm y$ & $\pm y$ & $\mp y$ & $\mp y$ & $\mp x$ & $\mp x$ & $\mp x$ & $\mp x$ & $\mp x$ & $\mp x$ \\
$q(x,y,y)$ & $\pm x$ & $\pm x$ & $\pm y$ & $\pm y$ & $\pm y$ & $\pm y$ & $\mp x$ & $\pm x$ & $\mp y$ & $\mp y$ & $\mp y$ & $\mp y$ & $\pm x$ & $\pm x$ \\
$q(y,x,x)$ & $\pm y$ & $\pm y$ & $\pm x$ & $\pm x$ & $\pm y$ & $\pm y$ & $\mp y$ & $\pm y$ & $\mp x$ & $\mp x$ & $\pm y$ & $\pm y$ & $\mp x$ & $\mp x$ \\
$q(y,x,y)$ & $\pm x$ & $\pm y$ & $\pm x$ & $\pm y$ & $\pm x$ & $\pm y$ & $\mp x$ & $\mp x$ & $\mp y$ & $\mp y$ & $\mp y$ & $\mp y$ & $\mp y$ & $\mp y$ \\
$q(y,y,x)$ & $\pm y$ & $\pm y$ & $\pm y$ & $\pm y$ & $\pm y$ & $\pm y$ & $\pm x$ & $\pm x$ & $\mp y$ & $\pm x$ & $\mp y$ & $\pm x$ & $\mp y$ & $\pm x$ \\
$q(y,y,y)$ & $\pm y$ & $\pm y$ & $\pm y$ & $\pm y$ & $\pm y$ & $\pm y$ & $\pm y$ & $\pm y$ & $\pm y$ & $\pm y$ & $\pm y$ & $\pm y$ & $\pm y$ & $\pm y$ \\
\hline
\end{tabular}
\begin{tabular}{| l | c | c | c | c | c | c | c | c | c | c | c | c | c | c |  }
\hline
$q(x,x,x)$ & $\pm x$ & $\pm x$ & $\pm x$ & $\pm x$ & $\pm x$ & $\pm x$ & $\pm x$ & $\pm x$ & $\pm x$ & $\pm x$ & $\pm x$ & $\pm x$ & $\pm x$ & $\pm x$ \\
$q(x,x,y)$ & $\pm y$ & $\pm y$ & $\pm y$ & $\pm y$ & $\pm y$ & $\pm y$ & $\pm y$ & $\pm y$ & $\pm y$ & $\pm y$ & $\pm y$ & $\pm y$ & $\pm y$ & $\pm y$ \\
$q(x,y,x)$ & $\mp x$ & $\mp x$ & $\pm x$ & $\pm x$ & $\pm x$ & $\pm x$ & $\pm x$ & $\pm x$ & $\pm x$ & $\pm x$ & $\pm y$ & $\pm y$ & $\pm y$ & $\pm y$ \\
$q(x,y,y)$ & $\pm x$ & $\pm x$ & $\pm x$ & $\pm x$ & $\pm x$ & $\pm x$ & $\pm y$ & $\pm y$ & $\pm y$ & $\pm y$ & $\mp y$ & $\mp y$ & $\mp y$ & $\mp x$ \\
$q(y,x,x)$ & $\pm y$ & $\pm y$ & $\pm x$ & $\pm x$ & $\pm y$ & $\pm y$ & $\pm x$ & $\pm x$ & $\pm y$ & $\pm y$ & $\mp x$ & $\mp x$ & $\pm y$ & $\mp y$ \\
$q(y,x,y)$ & $\mp y$ & $\mp y$ & $\pm y$ & $\pm y$ & $\pm y$ & $\pm y$ & $\pm y$ & $\pm y$ & $\pm y$ & $\pm y$ & $\mp y$ & $\pm x$ & $\mp y$ & $\pm x$ \\
$q(y,y,x)$ & $\mp y$ & $\pm x$ & $\pm x$ & $\pm y$ & $\pm x$ & $\pm y$ & $\pm x$ & $\pm y$ & $\pm x$ & $\pm y$ & $\mp y$ & $\pm x$ & $\mp y$ & $\pm x$ \\
$q(y,y,y)$ & $\pm y$ & $\pm y$ & $\pm y$ & $\pm y$ & $\pm y$ & $\pm y$ & $\pm y$ & $\pm y$ & $\pm y$ & $\pm y$ & $\pm y$ & $\pm y$ & $\pm y$ & $\pm y$ \\
\hline
\end{tabular}
\begin{tabular}{| l | c | c | c | c | c | c | c | c | c | c | c | c | c | c |  }
\hline
$q(x,x,x)$ & $\pm x$ & $\pm x$ & $\pm x$ & $\pm x$ & $\pm x$ & $\pm x$ & $\pm x$ & $\pm x$ & $\pm y$ & $\pm y$ & $\pm y$ & $\pm y$ & $\pm y$ & $\pm y$ \\
$q(x,x,y)$ & $\pm y$ & $\pm y$ & $\pm y$ & $\pm y$ & $\pm y$ & $\pm y$ & $\pm y$ & $\pm y$ & $\mp y$ & $\mp y$ & $\mp y$ & $\mp y$ & $\mp y$ & $\mp y$ \\
$q(x,y,x)$ & $\pm y$ & $\pm y$ & $\pm y$ & $\pm y$ & $\pm y$ & $\pm y$ & $\pm y$ & $\pm y$ & $\mp y$ & $\mp y$ & $\mp y$ & $\mp y$ & $\mp y$ & $\mp y$ \\
$q(x,y,y)$ & $\pm x$ & $\pm x$ & $\pm x$ & $\pm x$ & $\pm x$ & $\pm y$ & $\pm y$ & $\pm y$ & $\mp y$ & $\mp x$ & $    0$ & $\pm y$ & $\pm y$ & $\pm y$ \\
$q(y,x,x)$ & $\mp x$ & $\pm x$ & $\pm y$ & $\pm y$ & $\pm y$ & $\pm x$ & $\pm x$ & $\pm y$ & $    0$ & $\mp y$ & $    0$ & $\mp y$ & $    0$ & $\pm x$ \\
$q(y,x,y)$ & $\mp y$ & $\pm y$ & $\mp y$ & $\pm x$ & $\pm y$ & $\pm x$ & $\pm y$ & $\pm y$ & $    0$ & $\pm y$ & $    0$ & $\pm y$ & $    0$ & $\mp x$ \\
$q(y,y,x)$ & $\mp y$ & $\pm y$ & $\mp y$ & $\pm x$ & $\pm y$ & $\pm x$ & $\pm y$ & $\pm y$ & $    0$ & $\pm y$ & $    0$ & $\pm y$ & $    0$ & $\mp x$ \\
$q(y,y,y)$ & $\pm y$ & $\pm y$ & $\pm y$ & $\pm y$ & $\pm y$ & $\pm y$ & $\pm y$ & $\pm y$ & $    0$ & $\mp y$ & $    0$ & $\mp y$ & $    0$ & $\pm x$ \\
\hline
\end{tabular}
\begin{tabular}{| l | c | c | c | c | c | c | c | c | c | c | c | c | c | c |  }
\hline
$q(x,x,x)$ & $\pm y$ & $\pm y$ & $\pm y$ & $\pm y$ & $\pm y$ & $\pm y$ & $\pm y$ & $\pm y$ & $\pm y$ & $\pm y$ & $\pm y$ & $\pm y$ & $\pm y$ & $\pm y$ \\
$q(x,x,y)$ & $\mp y$ & $\mp y$ & $\mp y$ & $\mp y$ & $\mp y$ & $\mp y$ & $\mp y$ & $\mp y$ & $\mp y$ & $\mp x$ & $\mp x$ & $    0$ & $    0$ & $    0$ \\
$q(x,y,x)$ & $    0$ & $    0$ & $    0$ & $\pm x$ & $\pm x$ & $\pm y$ & $\pm y$ & $\pm y$ & $\pm y$ & $\mp x$ & $\pm x$ & $\mp y$ & $\mp y$ & $\mp y$ \\
$q(x,y,y)$ & $\mp y$ & $    0$ & $\pm y$ & $\mp x$ & $\pm y$ & $\mp y$ & $    0$ & $\pm y$ & $\pm y$ & $\pm y$ & $\pm y$ & $\mp y$ & $    0$ & $\pm y$ \\
$q(y,x,x)$ & $    0$ & $    0$ & $    0$ & $\mp y$ & $\mp y$ & $    0$ & $    0$ & $\mp x$ & $    0$ & $\pm x$ & $\mp x$ & $    0$ & $    0$ & $    0$ \\
$q(y,x,y)$ & $    0$ & $    0$ & $    0$ & $\pm y$ & $\pm y$ & $    0$ & $    0$ & $\pm x$ & $    0$ & $\mp y$ & $\mp y$ & $    0$ & $    0$ & $    0$ \\
$q(y,y,x)$ & $    0$ & $    0$ & $    0$ & $\pm y$ & $\pm y$ & $    0$ & $    0$ & $\mp x$ & $    0$ & $\mp y$ & $\pm y$ & $    0$ & $    0$ & $    0$ \\
$q(y,y,y)$ & $    0$ & $    0$ & $    0$ & $\mp y$ & $\mp y$ & $    0$ & $    0$ & $\mp x$ & $    0$ & $\pm x$ & $\mp x$ & $    0$ & $    0$ & $    0$ \\
\hline
\end{tabular}
\begin{tabular}{| l | c | c | c | c | c | c | c | c | c | c | c | c | c | c |  }
\hline
$q(x,x,x)$ & $\pm y$ & $\pm y$ & $\pm y$ & $\pm y$ & $\pm y$ & $\pm y$ & $\pm y$ & $\pm y$ & $\pm y$ & $\pm y$ & $\pm y$ & $\pm y$ & $\pm y$ & $\pm y$ \\
$q(x,x,y)$ & $    0$ & $    0$ & $    0$ & $    0$ & $    0$ & $    0$ & $    0$ & $    0$ & $\pm x$ & $\pm x$ & $\pm x$ & $\pm x$ & $\pm x$ & $\pm x$ \\
$q(x,y,x)$ & $    0$ & $    0$ & $    0$ & $    0$ & $    0$ & $\pm y$ & $\pm y$ & $\pm y$ & $\mp y$ & $\mp y$ & $\mp x$ & $\pm x$ & $\pm x$ & $\pm x$ \\
$q(x,y,y)$ & $\mp y$ & $    0$ & $\pm y$ & $\pm y$ & $\pm y$ & $\mp y$ & $    0$ & $\pm y$ & $\mp x$ & $\pm y$ & $\pm y$ & $\mp x$ & $\pm x$ & $\pm y$ \\
$q(y,x,x)$ & $    0$ & $    0$ & $\mp x$ & $    0$ & $\pm x$ & $    0$ & $    0$ & $    0$ & $\mp y$ & $\mp y$ & $\mp x$ & $\mp y$ & $\pm y$ & $\mp y$ \\
$q(y,x,y)$ & $    0$ & $    0$ & $    0$ & $    0$ & $    0$ & $    0$ & $    0$ & $    0$ & $\pm y$ & $\pm y$ & $\pm y$ & $\pm y$ & $\pm y$ & $\pm y$ \\
$q(y,y,x)$ & $    0$ & $    0$ & $    0$ & $    0$ & $    0$ & $    0$ & $    0$ & $    0$ & $\pm y$ & $\pm y$ & $\mp y$ & $\pm y$ & $\pm y$ & $\pm y$ \\
$q(y,y,y)$ & $    0$ & $    0$ & $\mp x$ & $    0$ & $\pm x$ & $    0$ & $    0$ & $    0$ & $\mp y$ & $\mp y$ & $\mp x$ & $\mp y$ & $\pm y$ & $\mp y$ \\
\hline
\end{tabular}
\begin{tabular}{| l | c | c | c | c | c | c | c | c | c | c | c | c | c | c |  }
\hline
$q(x,x,x)$ & $\pm y$ & $\pm y$ & $\pm y$ & $\pm y$ & $\pm y$ & $\pm y$ & $\pm y$ & $\pm y$ & $\pm y$ & $\pm y$ & $\pm y$ & $\pm y$ & $\pm y$ & $\pm y$ \\
$q(x,x,y)$ & $\pm x$ & $\pm x$ & $\pm x$ & $\pm x$ & $\pm y$ & $\pm y$ & $\pm y$ & $\pm y$ & $\pm y$ & $\pm y$ & $\pm y$ & $\pm y$ & $\pm y$ & $\pm y$ \\
$q(x,y,x)$ & $\pm x$ & $\pm x$ & $\pm y$ & $\pm y$ & $\mp y$ & $\mp y$ & $\mp y$ & $\mp y$ & $    0$ & $    0$ & $    0$ & $\pm x$ & $\pm x$ & $\pm y$ \\
$q(x,y,y)$ & $\pm y$ & $\pm y$ & $\pm x$ & $\pm y$ & $\mp y$ & $    0$ & $\pm y$ & $\pm y$ & $\mp y$ & $    0$ & $\pm y$ & $\pm x$ & $\pm y$ & $\mp y$ \\
$q(y,x,x)$ & $\pm x$ & $\pm y$ & $\pm y$ & $\pm y$ & $    0$ & $    0$ & $\mp x$ & $    0$ & $    0$ & $    0$ & $    0$ & $\pm y$ & $\pm y$ & $    0$ \\
$q(y,x,y)$ & $\pm y$ & $\pm y$ & $\pm y$ & $\pm y$ & $    0$ & $    0$ & $\mp x$ & $    0$ & $    0$ & $    0$ & $    0$ & $\pm y$ & $\pm y$ & $    0$ \\
$q(y,y,x)$ & $\pm y$ & $\pm y$ & $\pm y$ & $\pm y$ & $    0$ & $    0$ & $\pm x$ & $    0$ & $    0$ & $    0$ & $    0$ & $\pm y$ & $\pm y$ & $    0$ \\
$q(y,y,y)$ & $\pm x$ & $\pm y$ & $\pm y$ & $\pm y$ & $    0$ & $    0$ & $\mp x$ & $    0$ & $    0$ & $    0$ & $    0$ & $\pm y$ & $\pm y$ & $    0$ \\
\hline
\end{tabular}
\begin{tabular}{| l | c | c | c | c | c |  }
\hline
$q(x,x,x)$ & $\pm y$ & $\pm y$ & $\pm y$ & $\pm y$ & $\pm y$\\
$q(x,x,y)$ & $\pm y$ & $\pm y$ & $\pm y$ & $\pm y$ & $\pm y$\\
$q(x,y,x)$ & $\pm y$ & $\pm y$ & $\pm y$ & $\pm y$ & $\pm y$\\
$q(x,y,y)$ & $    0$ & $\pm x$ & $\pm y$ & $\pm y$ & $\pm y$\\
$q(y,x,x)$ & $    0$ & $\pm y$ & $    0$ & $\pm x$ & $\pm y$\\
$q(y,x,y)$ & $    0$ & $\pm y$ & $    0$ & $\pm x$ & $\pm y$\\
$q(y,y,x)$ & $    0$ & $\pm y$ & $    0$ & $\pm x$ & $\pm y$\\
$q(y,y,y)$ & $    0$ & $\pm y$ & $    0$ & $\pm x$ & $\pm y$\\
\hline
\end{tabular}
\end{flushleft}
\normalsize

\end{proposition}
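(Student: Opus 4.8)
The plan is to use comultiplication–compatibility, which is extremely rigid, to cut the problem down to a finite list of candidates, and then to impose ternary distributivity on that list. First, for each basis monomial $a\otimes b\otimes c$ with $a,b,c\in\{x,y\}$, I would evaluate the compatibility identity $\Delta\circ q=(q\otimes q\otimes q)\circ\rho\circ(\Delta\otimes\Delta\otimes\Delta)$ on $a\otimes b\otimes c$. Since $\Delta(x)=x\otimes x\otimes x$ and $\Delta(y)=y\otimes y\otimes y$, the nine-leg tensor $\Delta(a)\otimes\Delta(b)\otimes\Delta(c)$ is $(a,a,a,b,b,b,c,c,c)$, which $\rho$ regroups into three copies of $(a,b,c)$; hence the right side collapses to $q(a\otimes b\otimes c)^{\otimes 3}$, and compatibility is equivalent to the eight independent equations $\Delta\big(q(a\otimes b\otimes c)\big)=q(a\otimes b\otimes c)^{\otimes 3}$. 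Writing $q(a\otimes b\otimes c)=\alpha x+\beta y$ and matching coefficients of the monomials in $A^{\otimes 3}$ forces $\alpha^3=\alpha$, $\beta^3=\beta$, $\alpha^2\beta=\alpha\beta^2=0$, so (the cubic $t^3-t$ having only the roots $0,\pm1$ in $\K$) $q(a\otimes b\otimes c)\in\{0,\pm x,\pm y\}$, with the eight values unconstrained relative to one another. This leaves at most $5^8$ candidate maps, exactly the bound anticipated in the outline preceding the statement.

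Second, I would note that ternary distributivity for a \emph{linear} $q$ is the operator identity $q\circ(q\otimes id\otimes id)=q\circ(q\otimes q\otimes q)\circ\rho\circ(id\otimes id\otimes id\otimes D\otimes D)$ of maps $A^{\otimes 5}\to A$, with $D(x)=x\otimes x\otimes x$ extended linearly as in Section~\ref{bialg}. Both sides being linear, the identity holds on all of $A^{\otimes 5}$ precisely when it holds on the $2^5=32$ basis tuples $a_1\otimes\cdots\otimes a_5$, $a_i\in\{x,y\}$; on such a tuple the left side is $q\big(q(a_1\otimes a_2\otimes a_3)\otimes a_4\otimes a_5\big)$ and the right side is $q\big(q(a_1\otimes a_4\otimes a_5)\otimes q(a_2\otimes a_4\otimes a_5)\otimes q(a_3\otimes a_4\otimes a_5)\big)$, each computed from the eight generator values by one more linear application of $q$. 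Running the finitely many candidates of the previous step through these $32$ identities, and keeping exactly those that pass all of them, produces the list tabulated in the statement.

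To keep the enumeration tractable by hand (and to explain the layout of the table) I would exploit two symmetries of the solution set. If $q$ solves the distributive identity then so does $-q$: replacing $q$ by $-q$ alters each side of the identity --- which is $q$ composed with copies of $q$ --- by an even number of sign changes (two on the left, four on the right), and compatibility is obviously preserved; this is why each column of the table is written with coupled signs $\pm$ and $\mp$, a solution and its negative forming a single entry. Also, the transposition $x\leftrightarrow y$ is a coalgebra automorphism of $A$, and transporting a solution through it yields another solution, so the search may be carried out modulo this swap as well. The genuine content of the argument is the first two steps; the real obstacle is the organized bookkeeping of the resulting finite case check --- ensuring that no surviving candidate is omitted and that the sign pattern recorded in each column is correct --- which, as with the order-$3$ classification of ternary quandles in Section~2, is most safely done with a short computer program.
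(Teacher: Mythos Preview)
Your proposal is correct and follows essentially the same approach as the paper: first use compatibility with $\Delta$ to force each of the eight generator values $q(a\otimes b\otimes c)$ into $\{0,\pm x,\pm y\}$ via the equations $\alpha^3=\alpha$, $\beta^3=\beta$, $\alpha^2\beta=\alpha\beta^2=0$, and then impose the distributive identity on the resulting finite list of candidates. Your explicit observations about the $q\mapsto -q$ and $x\leftrightarrow y$ symmetries, and the reduction of distributivity to the $32$ basis tuples, are not spelled out in the paper but are correct and nicely explain the $\pm/\mp$ structure of the table.
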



\noindent
\textsc{M.E and M. G; \\
Department of Mathematics, \\
University of South Florida, \\
4202 E Fowler Ave., \\
Tampa, FL 33620}\\
USA\\\\
and\\\\
\textsc{A. M;\\
Laboratoire de Math\'ematiques, \\
Informatique et Applications, \\
Universit\'e de Haute Alsace, \\
France.}

\end{document}